\documentclass[reqno,a4paper,12pt]{amsart}
\usepackage{latexsym, amsmath, amssymb, amsthm, a4}
\usepackage{graphicx}
\font\sc=rsfs10 at 12pt
\usepackage{amsfonts, bbold, bbm, marvosym}
\usepackage{amssymb}
\usepackage[mathscr]{eucal}
\usepackage{mathrsfs}

\newtheorem{theorem}{Theorem}[section]

\theoremstyle{remark}
\newtheorem{remark}[theorem]{Remark}

%%%%%%%%%%%%%%%%%%%%%%%%%%%%%%%%%%%%%%%%%%%%%%%%%%%%%%%%%%%%%%%%%%%%%%%%%%%%%%%%%%%%%%%%%%%%%%%%%%%%%%%%%%%
%  MY PREAMBLE
\renewcommand{\a}{\alpha}
\renewcommand{\b}{\beta}

\newcommand{\g}{\gamma}
\newcommand{\G}{\Gamma}
\renewcommand{\d}{\delta}

\newcommand{\y}{\eta}

\renewcommand{\i}{\iota}

\newcommand{\vk}{\varkappa}
\renewcommand{\l}{\lambda}
\renewcommand{\L}{\Lambda}
\newcommand{\m}{\mu}
\newcommand{\n}{\nu}
\newcommand{\x}{\xi}

\newcommand{\s}{\sigma}

\newcommand{\vs}{\varsigma}
\renewcommand{\t}{\tau}

\newcommand{\f}{\phi}

\newcommand{\F}{\Phi}

\renewcommand{\o}{\omega}
\renewcommand{\O}{\Omega}
\newcommand{\xbc}{\xb^{\circ}}

%%%%%%%%%%%%%%%%%%%%%%%%%%%%%%%%%%%%%%%%%%%%%%%%%%%%%%%%%%%
% Various definitions and newcommands.          %%
%%%%%%%%%%%%%%%%%%%%%%%%%%%%%%%%%%%%%%%%%%%%%%%%%%%%%%%%%%%

%% Various numbers, half-planes etc.

\newcommand{\C}{{\mathbb C}}
\newcommand{\R}{{\mathbb R}}

\newcommand{\nup}{\pmb{\pmb{\n}}}

\newcommand{\bx}{\pmb{\xi}}
%% Vectors in various sizes

%% Small bold-face letters

\newcommand{\db}{{\mathbf d}}
\newcommand{\eb}{{\mathbf e}}

\newcommand{\kb}{{\mathbf k}}

\newcommand{\pb}{{\mathbf p}}

\newcommand{\rb}{{\mathbf r}}

\newcommand{\ub}{{\mathbf u}}

\newcommand{\xb}{{\mathbf x}}
\newcommand{\xt}{{\mathbf x}^{\bullet}}
\newcommand{\yb}{{\mathbf y}}
\newcommand{\zb}{{\mathbf z}}

%% Capital bold-face letters
%\newcommand{\km}{{\mathbbm{k}}}
\newcommand{\mm}{{\mathbbm{m}}}

\newcommand{\Eb}{{\mathbf E}}
\newcommand{\Fb}{{\mathbf F}}
\newcommand{\Gb}{{\mathbf G}}

\newcommand{\Ib}{{\mathbf I}}

\newcommand{\Nb}{{\mathbf N}}

\newcommand{\Qb}{{\mathbf Q}}

\newcommand{\Tb}{{\mathbf T}}

\newcommand{\kn}{\mathbbm{k}}

%% Fraktur letters
\newcommand{\km}{\mathbbm{k}}
\newcommand{\aF}{\mathfrak a}
\newcommand{\AF}{\mathfrak A}
\newcommand{\bF}{\mathfrak b}
\newcommand{\BF}{\mathfrak B}

\newcommand{\gF}{\mathfrak g}

\newcommand{\hF}{\mathfrak h}

\newcommand{\jF}{\mathfrak j}

\newcommand{\kF}{\mathfrak k}
\newcommand{\KF}{\mathfrak K}

\newcommand{\mF}{\mathfrak m}
\newcommand{\MF}{\mathfrak M}

\newcommand{\qF}{\mathfrak q}
\newcommand{\QF}{\mathfrak Q}

\newcommand{\rF}{\mathfrak r}
\newcommand{\RF}{\mathfrak R}
\newcommand{\ssF}{\mathfrak s}
\newcommand{\SF}{\mathfrak S}

\newcommand{\uF}{\mathfrak u}

\newcommand{\ZF}{\mathfrak Z}
\newcommand{\zF}{\mathfrak z}

\newcommand{\vF}{\mathfrak v}

%% Calligraphic letters

\newcommand{\Ac}{{\mathcal A}}
\newcommand{\Bc}{{\mathcal B}}

\newcommand{\Dc}{{\mathcal D}}

\newcommand{\Fc}{{\mathcal F}}

\newcommand{\Kc}{{\mathcal K}}
\newcommand{\Lc}{{\mathcal L}}

\newcommand{\Qc}{{\mathcal Q}}
\newcommand{\Rc}{{\mathcal R}}
\newcommand{\Sc}{{\mathcal S}}

 % NEW
 % NEW

%% SCRIPT letters
\newcommand{\As}{\sc\mbox{A}\hspace{1.0pt}}

\newcommand{\Fs}{\sc\mbox{F}\hspace{1.0pt}}

%% Bold-face greek letters
\newcommand{\Bs}{\sc\mbox{B}\hspace{1.0pt}}
 % NEW

% rot, grad  and div
\newcommand{\dist}{{\rm dist}\,}
\newcommand{\diag}{{\rm diag}\,}

\newcommand{\grad}{{\rm grad}\,}

\renewcommand{\div}{{\rm div}\,}

 % NEW

%%specials

%\newcommand{\Qa}{\overset{\ast}{\QF}{}}

%\newcommand{\QFc}{{\overset{\ast}{\QF^\circ}}}

\newcommand{\pd}{\partial} % Partial derivative

%\newcommand{\bl}{\bm{[}}

%\newcommand{\As}{\Ac^{\sharp}}

%\newcommand{\pO}{\partial\Omega}
%% Hilbert spaces

%\newcommand{\ND}{N \!\!\! D}
%\newcommand{\DN}{D \!\!\! N}

%% Standard notions about operators

\newcommand{\diam}{\operatorname{diam\,}}

\newcommand{\Tr}{\operatorname{Tr\,}}
\newcommand{\tr}{\operatorname{tr\,}}

\newcommand{\pnu}{{\partial_{\nup}}}
% NEW
%\newcommand{\deg}{\operatorname{deg}}
%% Spectra of operators

%%

%% GRs extra newcommands:

%\newcommand{\Range}{\operatorname{Range}}

%% End of GR's

%% Semigroups

%% Domain

%% Spaces of operators

\newcommand{\bL}{\pmb{\L}}
%% Standard function spaces

 %% C_{\infty}
 %% C_{0}^{\infty}
 %% C_{0}^{\infty}
 %% Test functions
 %%
 %% Schwarz space

%% Spaces of distributions

 %% D'
 %% S'
 %% E'

%% Fourier transform

%% Skew-symmetric matrix

%%%%%%%%%%%%%%%%%%%%%%%%%%%%%%%%%%%%%%%%%%%%%%%%%%%%%%%
%%                                           %%
%% Theorem-like environments                 %%
%%                                           %%
%%%%%%%%%%%%%%%%%%%%%%%%%%%%%%%%%%%%%%%%%%%%%%%%%%%%%%%
\theoremstyle{plain}

\newtheorem{proposition}[theorem]{Proposition}
\newtheorem{corollary}[theorem]{Corollary}
\newtheorem*{theorem*}{Theorem}

\theoremstyle{definition}
%\newtheorem{defn}[thm]{Definition}

%\theoremstyle{remark}
%\newtheorem{remark}[theorem]{Remark}
%\newtheorem{example}[theorem]{Example}
%\newtheorem{question}[theorem]{Questions}

%% end of theorem-like environments

%%%%%%%%%%%%%%%%%%%%
%\newcommand{\sasha}[1]{\marginpar{{\bf Sasha: }\small #1}} % if you
%want my comments on
%\newcommand{\sasha}[1]{} % if you want them off
%\newcommand{\GR}[1]{\marginpar{{\bf GR: }\small #1}}
%\newcommand{\GR}[1]{}

%\bibliographystyle{plain}

%\newcommand{\qed}{ $\Box$}
%\newcommand{\ds}{\displaystyle}
%\newcommand{\pf}{\noindent {\sl Proof}. \ }

%\newcommand{\pd}[2]{\frac {\p #1}{\p #2}}

%\newcommand{\Sbb}{\mathbb{S}}

\renewcommand{\div}{\mbox{div}\,}

\newcommand{\la}{\langle}
\newcommand{\ra}{\rangle}

%%%%%%%%%%%%%%%%%%%%%%%%%%%%%%%%%%%%%%%%%%%%
% define bold face
%%%%%%%%%%%%%%%%%%%%%%%%%%%%%%%%%%%%%%%%%%%%

%%%%%%%%%%%%%%%%%%%%%%%%%%%%%

%%%%%%%%%%
\newcommand{\beq}{\begin{equation}}
\newcommand{\eeq}{\end{equation}}

\numberwithin{equation}{section}
\numberwithin{figure}{section}

%%%---------------------------------------------------------------------------------------
\begin{document}

\title[Neumann-Poincar\'e in 3D elasticity]{The discrete spectrum of the Neumann-Poincar\'e operator in 3D elasticity.}

\author{Grigori Rozenblum} 
 %\texttt{Chalmers University of Technology(Sweden); St.Petersburg State University, The Euler International Mathematical Institute %(St.Petersburg), Sirius University (Sochi, Russia)}\\
 %\texttt{grigori@chalmers.se}}

%\address{ Chalmers University of Technology(Sweden); St.Petersburg State University, The Euler International Mathematical Institute (St.Petersburg), Sirius University (Sochi, Russia)}

\email{grigori@chalmers.se}

\subjclass[2010]{47A75 (primary), 58J50 (secondary)}
\keywords{Eigenvalue asymptotics, Pseudodifferential operators, Neumann-Poincare operator, 3D elasticity}
%%\dedicatory{To Vassily Mikhalovich Babich, with admiration}
%\thanks{The author was supported  by the grant from the Russian Fund of Basic Research 20-01-00451, (Sections 1-4) and the grant from the Russian Science Foundation, Project 20-11-20032 (Sections 5,6).}

\begin{abstract}
For the Neumann-Poincar\'e (double layer potential) operator in the three-dimensional  elasticity we establish asymptotic formulas for eigenvalues converging to the points of the essential spectrum and discuss geometric and mechanical meaning of coefficients in these formulas. In particular, we establish that for any body, there are infinitely many eigenvalues converging from above to each point of the essential spectrum. On the other hand, if there is a point where the boundary is concave (in particular, if the body contains cavities) then for each point of the essential spectrum there exists a sequence of eigenvalues converging to this point from below. The reasoning is based upon the representation of the Neumann-Poincare operator as a zero order  pseudodifferential operator on the boundary and the earlier results by the author on the eigenvalue asymptotics for polynomially compact pseudodifferential operators.

\end{abstract}
\maketitle

%\noindent{\footnotesize {\bf Key words}. Neumann-Poincar\'e operator, Eigenvalues, Weyl's law, Pseudo-differential operators, Willmore energy}

%\tableofcontents

%%%%%%%%%%%%%%%%%%%%%%%%%%%%

\let\thefootnote\relax\footnote{ Chalmers University of Technology(Sweden); St.Petersburg State University, The Euler International Mathematical Institute (St.Petersburg), Sirius University (Sochi, Russia),\\ email: grigori@chalmers.se}

\section{Introduction}\label{Intro} The paper is devoted to the study of the spectrum of the Neumann-Poincar\'e (NP) operator in the 3D linear elasticity. It is based upon  results of the previous paper \cite{RozNP1}, where we considered general polynomially compact pseudodifferential operators and have derived formulas describing for such operators the behavior of eigenvalues converging to the points of the essential spectrum. The motivating example, the Neumann-Poincar\'e (the double layer potential) operator $\KF$ in 3D elasticity was presented, and a discussion of spectral properties of this operator has started.   The present paper continues the study of the eigenvalues of the Neumann-Poincar\'e elasticity operator for a homogeneous and isotropic 3D body $\Dc$ with smooth boundary $\G$ on the base of results in \cite{RozNP1}. It is known, since \cite{3D}, \cite{MR3D}, that this operator possesses three points of essential spectrum, namely, the zero point  and two symmetrical ones, $\pm \mathbbm{k}$, where $\mathbbm{k}$ is expressed via the Lam\'e constants of the material of the body and does not depend on its geometry. There may also exist finite or infinite  sequences of eigenvalues, converging (in the latter case) to the points of the essential spectrum. In this paper we find sufficient geometrical conditions for these sequences to be infinite (above or below of a point of the essential spectrum), and if this is the case, we study  asymptotic properties  of these sequences, following the general results in \cite{RozNP1}.

In fact, up to now, very little was known about the discrete spectrum of the operator $\KF$ beyond the case of the sphere (where the spectrum has been, rather recently, calculated explicitly, see \cite{DLL}).  In the general case, some estimates for the rate of convergence of these eigenvalues have been found in \cite{3D}.

For the case of the ball with radius $R$ and Lam\'e constants $\l,\m$, the eigenvalues of the NP operator, calculated in \cite{DLL}, form three series,

  \begin{gather}\label{ball}
\L_n^{0}(\KF)=\frac{3}{2(2n+1)}\sim \frac{3}{4n},\, n\to\infty \\\nonumber
\L_n^{-}(\KF)=\frac{3\lambda-2\mu(2n^2-2n-3)}{2(\lambda+2\mu)(4n^2-1)}\sim-\mathbbm{k}+ \frac{4\km}{n} ,\,n\to\infty \\\nonumber
\L_n^{+}(\KF)=\frac{-3\lambda+2\mu(2n^2+2n-3)}{2(\lambda+2\mu)(4n^2-1)},\sim\mathbbm{k}+ \frac{4\km}{n},\, n\to\infty,
\end{gather}
 $\mathbbm{k}=\frac{\m}{2(2\m+\l)},$
 each of $\L_n^{-}(\KF),$ $ \L_n^{0}(\KF), $ $\L_n^{+}(\KF)$ being a multiple eigenvalue with multiplicity $2n+1$. Additionally, it is found that no eigenvalues coincide with the points of the essential spectrum.
One can see that the eigenvalues $\L_n^{\pm}(\KF)$ (as well as even their  asymptotics) depend on the material of the body. A possible dependence on the geometry is concealed here,  due to the fact that the spectrum of $\KF$ is invariant under the homotheties of the body, so the only geometric characteristic of the sphere, its radius, is not present in the formulas for eigenvalues.   One can also notice that all three series converge to their limit values from above only; there are no eigenvalues that approach these points from below. This latter property is important, in particular, in the analysis of the plasmon resonance in elastic metamaterials, see, e.g., \cite{AndoReson}, \cite{Ammari20}, \cite{Li}, \cite{AndoEll}, \cite{AKM}, \cite{Ando2D},  \cite{LiLiu}, therefore, it is interesting to determine, to what extent these properties persist in a more general case.

Recently, the spectral problem has been studied for the NP operator in electrostatics, where the operator is compact. There, in dimension 3, for a smooth boundary, the eigenvalue asymptotics, a power-like one, was found in \cite{M}, \cite{MR}, with somewhat weaker results for the case of a finite smoothness. In dimension 2, the rate of convergence of eigenvalues to zero depends on the smoothness of the boundary; it is at least polynomial for a finite smoothness, super-polynomial for an infinitely smooth boundary, and (again, at least) exponential for an analytic boundary. Only upper estimates for eigenvalues are known. In the only case where the eigenvalue asymptotics is found, namely, for an ellipse,  the asymptotics is exponential (see \cite{M}).

 The situation is similar for the  elastic NP operator in dimension 2, where it has two points of the essential spectrum (\cite{AKM}, \cite{Ando2D}, \cite{Ammari20}); it was found that the rate of convergence of eigenvalues to these points depends again on the smoothness of the boundary.  Namely, the estimates obtained in these papers show that for a finitely smooth boundary, the eigenvalues converge to their limit points at least polynomially fast,  for an infinitely smooth boundary these eigenvalues converge super-polynomially fast, while for an analytic boundary they converge at least exponentially. An exact asymptotics of eigenvalues was never found, even for an ellipse.

In the present paper we consider a  body $\Dc\subset R^3$ made of a homogeneous isotropic elastic material with Lam\'e constants $\l,\m$ and  bounded by a smooth compact  surface $\G$. The NP operator is a polynomially compact zero order pseudodifferential operator, with three points $\o_{\i}: \o_{-1}=-\km, \o_0=0, \o_1=\km,$ in the essential spectrum, according to the results of \cite{AgrLame}, \cite{MR3D}. The asymptotics of eigenvalues of such operator tending to a point $\o_\i,$ $\i=-1,0,1,$ of the essential spectrum, as found in \cite{RozNP1}, is determined by a certain pseudodifferential operator $\MF_{\i}$ of order $-1$. The procedure for calculating the principal symbol $\mF_\i$ of $\MF_{\i}$ is quite intricate. Moreover, the calculation of the coefficients in the eigenvalue asymptotic formulas involves the eigenvalues of the latter symbol, a $3\times 3$ symbolic matrix; it presents the so-called irreducible case of the cubic equation. Therefore the symbolic expression for integrals of powers of these eigenvalues in the general case would be completely unwieldy and, even if found, be of no use for further analysis.

 In our approach, using the qualitative analysis of the NP operator as a singular integral operator,  we are able first to separate the dependence of the principal symbol $\mF_\i$ on the geometric characteristics of the surface and the dependence on the material of the body. This development is achieved thanks to determining the qualitative structure  of the symbol $\mF_\i$ as described in \cite{RozNP1}. Namely, we establish that, although the expression of the symbol $\mF_\i(x,\x),\,(x,\x)\in \mathrm{T}^*(\G),$ contains 25 additive terms, each being the product of  5 symbolic matrices, it is one and only  one factor in each such product that depends on the geometry of $\G,$ namely, it is a linear combination of principal curvatures of $\G$ at the point $x\in\G,$ with (matrix) coefficients depending universally on $\x$ and on the Lam\'e constants.  This leads to our structure result: the combination of these terms, the symbol $\mF_\i(x,\x)$, is a linear form of the principal curvatures with universal (depending only on the Lam\'e constants and $\x$) coefficients.

 Then a miraculous circumstance helps us. The Birman-Solomyak formula (\cite{BS}) for coefficients in the eigenvalue asymptotics for a negative order pseudodifferential operator  involves the trace of a certain, generally fractional, power of the  principal symbol or of its positive or negative parts, in other words, the sum of powers of (all or a part of)  eigenvalues of the matrix symbol $\mF_\i(x,\x)$. Calculating this sum requires, generally, knowing the eigenvalues themselves, and this, for $3\times 3$ matrices, cannot be achieved in the symbolic way, as was just discussed. The situation seems hopeless. However, there are exceptions. If the power is \emph{an  integer}, the sum of powers of all eigenvalues of a Hermitian matrix (or a matrix similar to a Hermitian one, as it happens in our case) can be found without knowing the eigenvalues  themselves, rather only by using polynomial operations with entries of the matrix. And, luckily, for the case of the NP operator on a two-dimensional surface, this power equals exactly $2.$  Therefore, the integrand in the Birman-Solomyak formula is the sum of squares of linear forms in principal curvatures, therefore, it is a \emph{quadratic form} of these curvatures, again, with universal coefficients depending on the Lam\'e constants only.

 This way of reasoning   enables us to determine the two-sided asymptotics for the eigenvalues of the NP operator, to say it more exactly, the asymptotics for the sum of the counting functions for eigenvalues above and below $\o_\i$. The reasoning explained above leads also to the fact that the geometrical characteristics entering in these formulas are the Euler characteristic of the surface $\G$ and its Willmore energy $W(\G)$ (see, e.g., \cite{Willmore} for a discussion of classical and recent problems and results concerning this latter quantity which indicates  how the surface in $\R^3$ is bent.)
 Such kind of formulas is  similar to the case of the NP operator in 3D electrostatics, derived in \cite{M}, \cite{MR}.

An important question concerns the infiniteness of the sequences of eigenvalues tending to the points of the essential spectrum separately from above and from below. The pattern obtained for the symbol  $\mF_\i(x,\x)$, symmetry considerations, and the  above asymptotic formulas show  that
 the sequences of eigenvalues converging to $\o_\i$ \emph{from above} are always infinite.  We also  find a sufficient condition in geometric terms for the \emph{infiniteness} of the sequences of eigenvalues converging to $\o_\i$ \emph{from below:} this happens, in particular, for sure, if there is at least one  point on $\G$ where the surface is concave or, more generally, where the mean curvature in a special co-ordinates system is positive. It deserves noticing that for a body with a cavity, where $\G$ is not connected, this happens always.

In the last section a more detailed analysis which shows in what way the coefficients in the effective  symbol $\mF_{\i}$ should depend on the material characteristics, the Lam\'e constants. This task requires a more detailed analysis of the process of calculation of the symbol and subsymbol of the NP operator.  The universality properties in the structure of the symbol $\mF_\i$ enable us to reduce this calculation to a unique particular case, namely of the surface being a cylinder with circular cross-section, where one of the  principal curvatures vanishes. The treatment of this case still requires a considerable calculational work, but it is at least more feasible as long, at least, as it concerns qualitative properties. As a result of our calculations, we establish that this symbol is a linear form of the quantities $\km=\frac{\m}{2(2\m+\l)}$ and $\mm=\frac12=\km$ with coefficients, now, depending only on $\i$ and on $\x\in\S^1.$ Some further properties of these coefficients are derived, using the universality, from the result for the case of the sphere.

  We should mention that an alternative approach to the elastic  NP spectral problem can, probably, be based upon  recent results on diagonalizing   matrix pseudodifferential  operators, see \cite{Capo.Diag}, \cite{CapoVas.Diag}; the important initial step, the global diagonalization of the principal symbol, is possible according  to the results of \cite{CR.Diag}. We plan to explore this approach in the future.

  The Author is appreciative
 to Y.Miyanishi for introducing him to the NP problematic as well as for useful discussions.

\section{Preliminaries}\label{results}
Let $\Dc\subset\R^3$ be a bounded (connected) body with smooth boundary $\G.$ It may happen that the surface $\G$ is not connected, namely,  in the case when the body has some cavities.

We write the Lam\'e system for a homogeneous isotropic body  $\Dc$ in the form
 \begin{gather*}%\label{Lame}
     \Lc \ub\equiv \Lc_{\l,\m}\ub\equiv -\m\div(\grad \ub)-(\l+\m)\grad(\div \ub)=0,\\  \nonumber
      \xb=(x_1,x_2,x_3)\in\Dc, \ub=(u_1,u_2,u_3)^\top,
 \end{gather*}
 where  $\l,\m$  are the Lam\'e constants.
 The fundamental solution  $\Rc(\xb,\yb)=[\Rc(\xb,\yb)]_{p,q},$ ${p,q=1,2,3},$ for the Lam\'e equations, \emph{the Kelvin matrix},  known since long ago, see, e.g., \cite{KuprPot},
 equals
 \begin{gather*}%\label{Kelvin}
    [\Rc(\xb,\yb)]_{p,q}=\l'\frac{\d_{p,q}}{|\xb-\yb|}+\m'\frac{(x_p-y_p)( x_q-y_q)}{|\xb-\yb|^3},\\ \nonumber \l'=\frac{\l+3\m}{4\pi \m(\l+2\m)},
    \, \m'=\frac{\l+\m}{4\pi\m(\l+2\m)}, \,\, p,q=1,2,3,\,\, \xb,\yb\in\R^3.
 \end{gather*}
 This expression can be found, in particular, by inverting the Fourier transform of the symbol $\rb(\bx)$ of $\Lc^{-1}$:
 \begin{equation}\label{KelvinF}
    \Rc(\xb,\yb)=\Fc^{-1}[\rb](\xb-\yb)\equiv(2\pi)^{-3}\int_{\R^3} e^{i(\xb-\yb)\bx} (\m \bx\bx^\top+(\l+\m)|\bx|^2\Eb)^{-1} d\bx,
 \end{equation}
where $\bx$ is treated as a column-vector, so that $\bx\bx^\top$ is a  $3\times 3$ square matrix; $\Eb$ is the unit $3\times 3$ matrix.

The classical boundary problems for the Lam\'e system involve the coboundary (traction) operator
\begin{equation*}%\label{traction}
    [\Tb(\xb,\partial_{\nup(\xb)})]_{p,q}=\l \n_p\pd_q+\l\n_q\pd_p+\m\d_{p,q}\pnu,\, p,q=1,2,3,
  \end{equation*}
  where $\nup=\nup(\xb)=(\n_1,\n_2,\n_3)$ is the  outward normal unit vector to $\G$ at the point $\xb$ and $\pnu_{(\xb)}$ is the
   directional derivative along $\nup(\xb)$.

 The NP operator $\KF$ on $\G$ is defined as
 \begin{equation}\label{DoubleLayer}
    (\KF[\psi])(\xb)=\int_{\G}\Kc(\xb,\yb)\psi(\yb)d\Sc(\yb)\equiv \int_{\G} \Tb(\yb,\pnu_{(\yb)}) \Rc(\xb,\yb)^{\top}\psi(\yb)d\Sc(\yb), \xb\in \G,
 \end{equation}
 where $d\Sc$ is the natural surface  measure  on $\G,$ the Riemannian measure generated by the embedding of $\G$ in $\R^3,$ and $\Tb(\yb,\pnu_{(\yb)})$ denotes the coboundary operator at the point $\yb\in\G.$ The explicit expression for the kernel $\Kc(\xb,\yb)$ is known since long ago as well; it is given, e.g., in \cite{Kupr79}, Sect.4, Ch.2:
 \begin{gather}\label{kernelNP}
     [\Kc(\xb,\yb)]_{p,q}=\frac{\mathbbm{k}}{2\pi}\frac{\n_p(\yb)(x_q-y_q)-\n_q(\yb)(x_p-y_p)}{|\xb-\yb|^3}-\\ \nonumber
    -\frac1{2\pi}\left(\km \d_{p,q}+ 3\mm \frac{(x_p-y_p)(x_q-y_q)}{|\xb-\yb|^2}\right)\sum_{l=1}^3 \n_l(\yb)\frac{x_l-y_l}{|\xb-\yb|^3};\, p,q=1,2,3,
 \end{gather}
 where
$$\mm=\frac{\l+\m}{2(\l+2\m)}=\frac12-\kn.$$

 Since the boundary is smooth,  $\KF$ is a pseudodifferential operator of order zero on the surface $\G$, i.e., a singular integral operator on  $\G$; the leading singularity at the diagonal $\yb=\xb\in \G$ of the  kernel, determined by off-diagonal terms in \eqref{kernelNP}, is of order $-2$ and it is odd in $\yb-\xb$ as $\yb\to\xb$. To express the symbol of $\KF$ as a pseudodifferential operator, a local co-ordinate system on $\G$ and a frame in $\R^3$ are fixed. Following \cite{AgrLame}, for a fixed point $\xb^\circ\in \G$, two co-ordinate axes $x=(x_1,x_2)$ are orthogonally  directed in the tangent plane to $\G$ at $\xb^\circ$, and the third axis $x_3$ is directed orthogonally, in the \emph{outward} direction, so that the surface $\G$ near $\xbc$ is described by the equation $x_3=F(x_1,x_2)$, and in these co-ordinates, $F(0,0)=0,\, \nabla F(0,0)=0$. These $x_1,x_2$ are chosen as local co-ordinates near $\xb^\circ$ on $\G$. The dual co-ordinates $\x=(\x_1,\x_2)$ in the tangent plane are directed along the same spacial axes in $\texttt{T}(\G)$. The corresponding  vectors will be accepted as the basis in the cotangent plane at $\xb^\circ$ (identified naturally with the tangent plane.) The same vectors as well as the normal $\nup(\xb^{\circ})$ are accepted as the frame in the fiber $\C^3$ over $\G$ near $\xb^\circ.$ In these co-ordinates, the principal symbol of $\KF$ is calculated in  \cite{AgrLame} to be equal to
 \begin{equation}\label{NPPrinc}
    \kF_0({x},\x)=\frac{i \mathbbm{k}}{|\x|}\begin{pmatrix}
                    0 & 0 & -\x_1 \\
                    0 & 0 & -\x_2 \\
                    \x_1 & \x_2 & 0 \\
                  \end{pmatrix},
 \end{equation}
 where $\mathbbm{k}=\frac{\m}{2(2\m+\l)}$, and the eigenvalues of this matrix,  $\o_{-1}=-\mathbbm{k}, \o_0=0$, $\o_1=\mathbbm{k}$, are the points of the essential spectrum of $\KF$. Note that these eigenvalues do not depend on the geometry of the body $\Dc.$ At this point $\xb^\circ,$ $dx_1dx_2$ equals the area element for the surface measure on $\G$ generated from the Lebesgue measure in $\R^3$
 by the embedding  $\G\subset\R^3.$.

 The eigenvectors of the principal symbol \eqref{NPPrinc} equal $\eb_{\pm}=2^{-\frac12}|\x|^{-1}(\x_1,\x_2,\pm \imath|\x|)^\top$ for the eigenvalues $\pm\kn$ and $\eb_0=|\x|^{-1}(\x_1,-\x_2,0)$ for the eigenvalue $0.$ In the literature cited above, one can encounter some considerations concerning the mechanical meaning of such form of eigenvectors. In particular, there is an assumption that the eigenfunctions of the NP operator, corresponding to its eigenvalues close to $0$, describe surface waves which are 'almost purely' compression ones, while two other eigenvectors describe 'almost incompressible' waves lying close to the subspace $\mathrm{div}\ub=0$. We do not know to what extent our results support these speculations. An essential progress on this topic seems to have been  made in a very recent preprint \cite{Fukushima}. There, a splitting of the space of vector-functions on $\G$ into three subspaces has been constructed, so that, on the one hand, the subspaces approximate spectral subspaces of $\KF$ corresponding to its spectrum around the points $\o_\i,$ and, on the other hand, functions in these subspaces possess extensions inside or outside $\G$ to functions with special properties, divergence and/or rotor-free.

 Due to the results of \cite{RozNP1}, in order to find the asymptotics of eigenvalues of $\KF$, we   need also expressions for the subsymbol of the operator $\KF$ and the derivatives of the principal symbol. These objects depend
 essentially on the choice of local co-ordinates and frame. The convenient choice, refining the one described above, will be determined later on, enabling considerable simplification in our calculations. We use the notion of 'subsymbol' for the symbol of order $-1$ of a zero order  pseudodifferential operator in a fixed co-ordinate system and a fixed frame. Unlike the classical notion of a 'subprincipal symbol' which is invariant under the change of local co-ordinates, the 'subsymbol' is not invariant, but the results of our eigenvalue calculations with the subsymbol turn out to be invariant.

 As  established in \cite{RozNP1}, the asymptotic behavior of the eigenvalues of  $\KF$, as they approach the points $\o_{\i}$,  is determined by the eigenvalue behavior for the compact operators $\MF_{\i}=\pb_{\i}(\KF)$, where $\pb_{\i}(\o)$ are polynomials specially constructed according to the eigenvalues of the principal symbol of the operator, see Lemma 4.1 in \cite{RozNP1}. In our case, the dimension $\Nb$ of the vector bundle, where the operator $\KF$ acts, equals 3 and all eigenvalues of the principal symbol are simple. Therefore, the number $\i$ takes values $-1,0,1,$ and the degree of the polynomial $\pb_\i(\o)$ equals 5. By Lemma 4.1 in \cite{RozNP1}, this means that the polynomial $\pb_\i(\o)$
 has the form
  \begin{equation}\label{p(o)}
    \pb_{\i}(\o)=(\o-\o_\i)\prod_{\i'\ne\i}(\o-\o_{\i'})^2, \,\i=-1,0,1.
 \end{equation}

 We are interested in the principal, order $-1$, symbol $\mF_{\i}=\mF_{\i,-1}(x,\x)$ of the operator $\MF_{\i}$; we call it the effective symbol. In \cite{RozNP1}, Proposition 4.2, the structure of this symbol was described.
Due to this Proposition, $\mF_{\i}$ is the sum of terms of 2 types. In order to write down them in a systematic way, we consider the sets  $\mathbb{J}_{\i} =\{\vs_j\}_{j=1,\dots,5},$ $\i=-1,0,1;$  the set $\mathbb{J}_{\i}$ consists of the integers $-1,0,1$, in such way that each of them is repeated twice, except $\i$ which is repeated only once, placed in the nondecreasing order, thus, $\mathbb{J}_{-1}=\{-1,0,0,1,1\}$, $\mathbb{J}_0=\{-1,-1,0,1,1\}$, $\mathbb{J}_1=\{-1,-1,0,0,1\}$. Then, by Proposition 4.2 in \cite{RozNP1}, the terms in $\mF_\i$ of type 1 have the form
 \begin{equation}\label{type1}
 \Fb_{l,\i}=\prod_{ j< l}(\kF_0-\o_{\vs_j})\kF_{-1}\prod_{j>l}(\kF_0-\o_{\vs_j}),\, \vs_j\in\mathbb{J}_{\i},\,\i=-1,0,1,\,\ l=1,\dots,5,\,
 \end{equation}
where $\kF_{-1}$ is the subsymbol  of the operator $\KF$ --  altogether 5 terms for each $\i$.
 The terms of type 2 have the form
\begin{gather}\label{type2}
\Gb_{\a,l,m,\i}=\frac1i\prod_{j<l}(\kF_0-\o_{\vs_j})(\partial_{\x_\a}\kF_0)
\prod_{l<j<m}(\kF_0-\o_{\vs_j})(\partial_{x_\a}\kF_0)\prod_{j>m}(\kF_0-\o_{\vs_j}),\\\nonumber \, \a=1,2; \, 1\le l< m\le 5,
\end{gather}
altogether 20 terms for each $\i$.

One should keep in mind that all factors in \eqref{type1}, \eqref{type2} are $3\times 3$ matrices. The factors of the form $\kF_0-\o_{\vs_j}$ commute with each other but the ones containing the  derivatives of $\kF_0$ and the subsymbol do not commute with  $\kF_0-\o_{\vs_j}.$ One can also see in  \eqref{type1}, \eqref{type2} that the  terms of type 1 contain $4$ factors  $\kF_0-\o_{\vs_j}$ while terms of type 2 contain $3$ such factors. Finally, the symbol $\mF_{\i}$ is equal to
\begin{equation}\label{complete symbol}
    \mF_{\i}=[\pb'_\i(\o_\i)]^{-1}\left(\sum_{l=1}^{5} \Fb_{l,\i}+\sum_{\a=1,2}\sum_{1\le l< m\le 5}\Gb_{\a,l,m,\i}\right),
\end{equation}
25 terms altogether, for each $\i=-1,0,1$.

In Sect.6, we discuss some economy approach for calculating the expression \eqref{complete symbol}. This gives us a possibility to describe the dependence of the effective symbol on the Lam\'e constants.

As was emphasized  earlier, the expressions in \eqref{type1}, \eqref{type2} contain factors which are not invariant with respect to changes of local co-ordinates on the surface $\G$ and the frame. This is unavoidable but not important, since the \emph{sum} of all such terms in \eqref{complete symbol}, being the \emph{principal}, order $-1$, symbol $\mF_{\i, -1}(x,\x)$ of the operator $\MF_{\i}$, is in the usual sense invariant with respect to  changes of local co-ordinates on $\G$ and may depend only on the frame; its eigenvalues are  invariant. We will use this invariance essentially when choosing  the co-ordinates and the frame in a convenient way. Moreover,  the operator $\pb_{\i}(\KF)$ and its principal symbol $\mF_{\i,-1}$ as well do not depend on the order in which the terms $\KF-\o_{\vs_j}$ are multiplied, although each particular term in   \eqref{complete symbol} does.

A special feature of the operator $\KF$ is the fact that it is not self-adjoint in $L^2(\G)$ (with the surface measure generated by the Lebesgue measure in $\R^3$). This operator is, however, \emph{symmetrizable}. This property was discussed in \cite{RozNP1}, Section 6.  Recall that the pseudodifferential operator $\KF$ is called symmetrizable if there exists a positive elliptic pseudodifferential operator $\SF$ such that
\begin{equation*}%\label{Symmetr}
    \AF=\SF^{-1/2}\KF\SF^{1/2},
\end{equation*}
is self-adjoint in $L^2(\G)$ or, equivalently,
\begin{equation}\label{Symmetr2}
    \SF\KF^*=\KF\SF.
\end{equation}
In Sect. \ref{Sect.symm} we discuss the symmetrizability of our  operator $\KF$ in more detail.

\section{Symmetrization and general asymptotic formulas}\label{Sect.symm}
\subsection{Symetrization}
   The fact that  the operator $\KF$ is not self-adjoint in $L^2(\G)$ is already easily visible from its definition \eqref{DoubleLayer} -- the adjoint operator involves the normal derivative at the point $\xb$, and not at the point $\yb$, as in \eqref{DoubleLayer}. This shortcoming can be circumvented by showing that $\KF$ is symmetrizable.

    Consider the \emph{single layer potential operator} on $\G$:
    \begin{equation}\label{SingleLayer}
        \SF[\psi](\xb)=\int_\G \Rc(\xb,\yb)\psi(\yb) d\Sc(\yb),\, \xb\in \G,
    \end{equation}
    the kernel $\Rc$ being defined in \eqref{KelvinF}.
    This is a self-adjoint operator in $L^2(\G)$. It is well known, see, e.g.  \cite{AgrLame}, that $\SF$
    (it is denoted by $A$ there) is an elliptic
    pseudodifferential operator of order $-1$. Therefore, $\SF$ maps the Sobolev space $H^s(\G)$
     into the space $H^{s+1}(\G)$ for any $s\in(-\infty,\infty)$.
     The principal symbol of $\SF$ has been calculated  in  \cite{AgrLame}, Sect. 1.6.
      In the local co-ordinates and the frame just used above, it has the block-matrix form
      \begin{equation*}\label{SingleSymbol}
        \ssF_{-1}(x,\x)=\frac{1}{2\m|\x|}\left(\Eb -\mm\begin{pmatrix}
                                             \bL(\x) & 0 \\
                                             0 & 1 \\
                                           \end{pmatrix}\right), \mm=\frac{\l+\m}{2(\l+2\m)}=\frac12-\km.
      \end{equation*}
Here $\bL(\x)$ is the  $2\times2$ matrix
\begin{equation*}%\label{Lambda}
    \bL(\x)=|\x|^{-2}\begin{pmatrix}
         \x_1^2 & \x_1\x_2 \\
        \x_1\x_2& \x_2^2 \\
       \end{pmatrix},
\end{equation*}
$\Eb$ denotes the unit $3\times 3$ matrix.

Matrix \eqref{SingleSymbol} is invertible, therefore, the operator $\SF$ is elliptic. We need some more, namely, that $\SF$ is positive in $L^2(\G)$.

\begin{proposition}\label{Prop.Positive} The single layer potential $\SF$ in \eqref{SingleLayer} is a positive operator in $L^2(\G)$, $\la\SF \psi,\psi\ra_{L^2(\G)}>0.$
 \end{proposition}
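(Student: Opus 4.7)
The plan is to identify $\langle \SF\psi,\psi\rangle_{L^2(\G)}$ with the total elastic energy of a natural extension of $\SF[\psi]$ to all of $\R^3$, which is manifestly non-negative. I would define
\[
  u(\xb):=\int_\G \Rc(\xb,\yb)\psi(\yb)\,d\Sc(\yb),\qquad \xb\in\R^3,
\]
and use the standard properties of the elastic single layer potential: (i) $u$ is continuous on $\R^3$, since the Kelvin kernel is weakly singular of order $-1$; (ii) $\Lc u=0$ in $\Dc$ and in $\Dc_e:=\R^3\setminus\overline{\Dc}$; (iii) $u(\xb)=O(|\xb|^{-1})$ and $\nabla u(\xb)=O(|\xb|^{-2})$ as $|\xb|\to\infty$; and (iv) the traction jump formula $\Tb^- u-\Tb^+ u=\psi$ on $\G$, where $\Tb^\pm$ denote the interior/exterior conormal trace with the outward normal $\nup$ of $\Dc$.

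Next, I apply Green's first identity for the Lamé system,
\[
  \int_{\Omega}\Lc u\cdot\overline{u}\,dx=\int_{\Omega}\bigl(2\m\,|\e(u)|^2+\l\,|\div u|^2\bigr)dx-\int_{\pd\Omega}\Tb u\cdot\overline{u}\,dS,
\]
with $\e(u)=\tfrac12(\nabla u+\nabla u^\top)$, separately in $\Omega=\Dc$ and $\Omega=\Dc_e$. Since $\Lc u=0$ in both domains, and since the decay estimates in (iii) make the boundary term at infinity vanish, one obtains
\[
  \int_\G \Tb^- u\cdot\overline{u}\,d\Sc=E(u,\Dc),\qquad -\int_\G \Tb^+ u\cdot\overline{u}\,d\Sc=E(u,\Dc_e),
\]
where $E(u,\Omega):=\int_\Omega (2\m|\e(u)|^2+\l|\div u|^2)\,dx$ is the (real, non-negative) elastic energy. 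Subtracting and invoking the jump formula (iv),
\[
  \overline{\langle \SF\psi,\psi\rangle}_{L^2(\G)}=\int_\G \psi\cdot\overline{u}\,d\Sc=E(u,\Dc)+E(u,\Dc_e)\ge 0,
\]
which, both sides being real, gives $\langle \SF\psi,\psi\rangle\ge 0$ for the physical Lamé constants $\m>0$, $3\l+2\m>0$ under which the elastic-energy density is a positive-definite quadratic form in $\e(u)$ and $\div u$.

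For the strict inequality, suppose $\langle \SF\psi,\psi\rangle=0$. Then both energies vanish, forcing $\e(u)=0$ and $\div u=0$ almost everywhere in $\R^3\setminus\G$, i.e.\ $u$ is an infinitesimal rigid motion in each connected component. In the unbounded component $\Dc_e$, the decay at infinity kills the rigid motion, so $u\equiv 0$ in $\Dc_e$. By continuity of $u$ across $\G$, the boundary trace of $u$ from inside vanishes, and a rigid motion of a connected region that vanishes on any $2$-dimensional piece of its boundary must be identically zero; hence $u\equiv 0$ in $\Dc$. The jump formula then yields $\psi=\Tb^- u-\Tb^+ u=0$.

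The main obstacle is essentially bookkeeping: one must verify with care the traction jump formula $\Tb^- u-\Tb^+u=\psi$ and the decay at infinity that legitimises Green's identity in the exterior domain. Both are classical, and are contained, e.g., in \cite{Kupr79}. Once these are granted, the rest of the argument reduces to Korn-type positive-definiteness of the Lamé energy together with the rigidity of displacements having zero strain, together with the fact that $\Dc_e$ is unbounded and connected and that $u$ is continuous across $\G$.
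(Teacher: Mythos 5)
Your argument is correct, but it follows a genuinely different route from the paper. The paper proves non-negativity purely on the Fourier side: it introduces the kernel $\Qc$ of the square root of the Lam\'e operator, obtained by inverting the Fourier transform of $\sqrt{\rb(\bx)}$, observes that $\Rc(\xb,\yb)=\int\Qc(\xb,\zb)\Qc(\zb,\yb)\,d\zb$, and hence factorizes $\SF=\Qb^*\Qb$ with $\Qb:L^2(\G)\to L^2(\R^3)$; strict positivity is then not argued from scratch but imported from the triviality of $\Ker\SF$ proved in \cite{AgrLame}. Your proof instead is the classical potential-theoretic one: Betti's (Green's first) identity in $\Dc$ and in the exterior, the traction jump relation $\Tb^-u-\Tb^+u=\psi$, and decay at infinity identify $\la\SF\psi,\psi\ra$ with the total elastic energy, and injectivity comes out of the same computation via rigidity of zero-strain fields, so you do not need the cited null-space result. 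The trade-offs: the paper's factorization is shorter and needs only positivity of the symbol of $\Lc$ (essentially $\m>0$, $\l+2\m>0$), while your energy argument needs pointwise positivity of the energy density (i.e.\ $\m>0$, $3\l+2\m>0$) but is more self-contained, its ingredients being exactly the classical jump and decay facts in \cite{Kupr79}. One small inaccuracy to fix: the paper explicitly allows cavities, in which case $\Dc_e=\R^3\setminus\overline{\Dc}$ is \emph{not} connected; the decay argument kills the rigid motion only in the unbounded component. This does not harm your proof: once $u\equiv0$ in the unbounded component, continuity across the outer boundary component and rigidity give $u\equiv0$ in $\Dc$, and then either continuity across the cavity boundaries, or simply the fact that rigid motions have vanishing traction so both $\Tb^\pm u$ vanish there, yields $\psi=0$ on the cavity components as well.
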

 \begin{proof}
 In the scalar case, for the single layer electrostatic potential, this property is well-known,
 see, e.g.,  \cite{Landkof}, Theorem 1.15. We failed to find an exact reasoning for the elastic case in the literature, therefore we present an elementary proof here.
 Denote by $\Qc(\xb,\yb)$ the fundamental solution for the square root of the  Lam\'e operator $\Lc$. This function can be constructed as
 \begin{equation*}
    \Qc(\xb,\yb)=(2\pi)^{-3/2}\int_{\R^3} e^{i(\xb-\yb)\bx}\sqrt{\rb(\bx)}d\bx,
 \end{equation*}
 with $\sqrt{\cdot}$ denoting here the positive square root of a positive matrix. Since $\sqrt{\rb(\bx)}\times\sqrt{\rb(\bx)}=\rb(\bx)$, the kernel $\Qc$ satisfies
 \begin{equation}\label{double}
    \int_{\R^3} \Qc(\xb,\zb)\Qc(\zb,\yb) d\zb =\Rc(\xb,\yb), \xb\in \Dc.
 \end{equation}
 Using \eqref{double}, we can represent the single layer potential  operator as $\SF=\Qb^*\Qb$, where $\Qb$ is the operator acting from $L^2(\G)$ to $L^2(\R^3)$ as
\begin{equation*}
 \Qb[\psi](\xb)=\int_\G \Qc(\xb,\yb)\psi(\yb)d\Sc(\yb).
\end{equation*}
 This representation shows that the operator $\SF$ is nonnegative. Finally, in accordance with \cite{AgrLame}, Proposition 1.2, the null space of $\SF$ is trivial, so $\SF$ is positive.
 \end{proof}

 Taking into account the ellipticity of $\SF$, we know now that $\SF$ is an \emph{isomorphism} of Sobolev spaces, $\SF: H^s(\G)\to H^{s+1}(\G), \, -\infty<s<\infty$. Moreover, any power of $\SF$ is an isomorphism $\SF^\g: H^s(\G)\to H^{s+\g}(\G)$, $-\infty<\g<\infty.$

The matrix $\bL=\bL(\x)$ satisfies $\bL^2=\bL$, this property enables us to calculate explicitly principal  symbols of some operators related with $\SF$. First, the inverse $\RF=\SF^{-1}$ is a pseudodifferential operator of order 1. Its principal symbol $\rF_1$ equals $\ssF^{-1}$,
 \begin{equation*}%\label{rF}
    \rF_1(\x)=2\m|\x|\left(\Eb +\frac{\l+\m}{\l+3\m}\begin{pmatrix}
                    \bL(\x) & 0 \\
                    0 & 1 \\
                  \end{pmatrix}\right)=(\ssF_{-1}(\x))^{-1}.
 \end{equation*}

  We will also need  the  (positive) square roots from the operators $\SF$ and $\RF$. The operator $\QF=\SF^{\frac12}$ is an elliptic pseudodifferential operator
 of order $-\frac12$ and its principal symbol equals
\begin{equation*}\label{qF}
    \qF_{-\frac12}(\x)=(\ssF_{-1}(\x))^{\frac12}=\frac{1}{(2\m|\x|)^{\frac12}}\left(\Eb-(1-(1-\mm)^{\frac12})
    \begin{pmatrix}
    \bL(\x) & 0 \\
     0 & 1 \\
     \end{pmatrix}\right).
     \end{equation*}

 In its turn, the principal symbol of the  order $\frac12$ pseudodifferential operator $\ZF=\QF^{-1}=\RF^{\frac12}$ equals
\begin{gather}\label{zF}
    \zF_{\frac12}(\x)=(\rF_1(\x))^{\frac12}=(\qF_{-\frac12}(\x))^{-1}=\\\nonumber
    (2\m|\x|)^{\frac12}\left(\Eb+ (\frac{1}{\sqrt{1-\mm}}-1)\begin{pmatrix}
                    \L(\x) & 0 \\
                    0 & 1 \\
                  \end{pmatrix}\right),
\end{gather}

We can show now that  our operator $\KF$ is symmetrizable in $L^2(\G)$, with  the single layer potential operator  $\SF$ acting as symmetrizer. In fact, the relation \eqref{Symmetr2} has been known since long ago; for the Lam\'e system it was established in \cite{Duduchava}, p.89, see also \cite{AgrLame}, Proposition 1.8.
Moreover, we know now that $\SF$ is positive. Consequently,  the operator $\AF= \SF^{\frac12}\KF\SF^{-\frac12}=      \ZF \KF \QF$ is
 self-adjoint in $L^2(\G)$.

 This operator $\AF$ is a zero order self-adjoint classical pseudodifferential operator, with the same spectrum as $\KF$. The latter statement is proved in \cite{RozNP1}, Proposition 6.1. By the composition rule for pseudodifferential operators, the principal symbol of $\AF$ is a matrix, similar to the principal symbol of $\KF$,
 \begin{equation*}\label{aF}
    \aF_0(x,\x)=\zF_{\frac12}(x,\x)\kF_0(x,\x)\qF_{-\frac12}(x,\x).
 \end{equation*}
  Being the principal symbol of a self-adjoint operator, the symbol $\aF_0(x,\x)$ has only real eigenvalues; it follows that the eigenvalues of the symbol $\kF_0(x,\x)$, a matrix, similar to $\aF_0(x,\x)$, are the same as the eigenvalues of $\aF_0(x,\x)$; in particular, they are real as well. Moreover,
 we notice immediately that the matrices $\kF_0(\x)$ and $\begin{pmatrix}
                    \bL(\x) & 0 \\
                    0 & 1 \\
                  \end{pmatrix}$ commute. Taking into account the expression for the principal symbols of $\SF^{\frac12}$ and $\SF^{-\frac12}$,
                  we obtain
\begin{equation}\label{SymbolFin}
    \aF_0(\x)=\kF_0(\x)=\frac{i\km}{|\x|}\begin{pmatrix}
                    0 & 0 & -\x_1 \\
                    0 & 0 & -\x_2 \\
                    \x_1 & \x_2 & 0 \\
                  \end{pmatrix}.\end{equation}

                  So, the principal symbols of $\KF$ and $\AF$ coincide; in particular, it follows that $\kF_0$ is a Hermitian matrix. This property follows, of course, also from the fact, easily verified, that the difference, $\KF-\KF^*$, is a pseudodifferential operator of order $-1$.%; this also implies that the symbol of order $-1$ in $\KF$ and $\KF^*$ is the same as well.

We cannot declare here that the subsymbol of $\KF$  and  $\ssF_{-1}$ commute; in fact, they do not. However, the important property for our calculations is the following consequence of the composition rule.

\begin{proposition}\label{prop.lower}    Let $\KF$ be a polynomially compact zero order     pseudodifferential operator and for given $\i$, $\pb_\i$ be the polynomial in \eqref{p(o)}. Suppose that $\KF$ is symmetrizable, with the pseudodifferential operator $\SF$ acting as symmetrizer. Then  the principal symbol $\mF_{\i,-1}(x,\x)$ of the operator $\MF_{\i}=\pb_{\i}(\KF)$ is a matrix similar to a Hermitian one, namely to the principal symbol $\bF_{\i,-1}(x,\x)$ of the operator $\BF_{\i}=\pb_{\i}(\AF)$ equals
\begin{equation}\label{sub}
    \bF_{\i,-1}(x,\x)=\zF_{\frac12}(x,\x)\mF_{\i,-1}(x,\x)\qF_{-\frac12}(x,\x);
\end{equation}
here $\qF_{-\frac12} $,  $\zF_{\frac12} $ are principal symbols of operators $\QF=\SF^{\frac12}$, resp., $\ZF=\SF^{-\frac12},$ see \eqref{qF}, \eqref{zF}.
\end{proposition}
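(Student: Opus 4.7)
The plan is to lift the self-adjointness of $\BF_{\i}=\pb_{\i}(\AF)$ to the level of the order-$(-1)$ principal symbol by exploiting the exact operator conjugation $\AF=\ZF\KF\QF$, which turns $\BF_{\i}$ into $\ZF\MF_{\i}\QF$ and thereby displays $\mF_{\i,-1}$ as similar to the Hermitian matrix $\bF_{\i,-1}$.

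First I would record that, since $\QF=\SF^{1/2}$ and $\ZF=\SF^{-1/2}$ are mutual inverses, the relation $\AF=\ZF\KF\QF$ iterates to $\AF^{k}=\ZF\KF^{k}\QF$ for every integer $k\ge 0$. The polynomial $\pb_{\i}$ has real coefficients (its roots $\o_{-1}=-\km$, $\o_{0}=0$, $\o_{1}=\km$ being real), so by linearity I obtain the \emph{exact} operator identity
\[
\BF_{\i}=\pb_{\i}(\AF)=\ZF\,\pb_{\i}(\KF)\,\QF=\ZF\,\MF_{\i}\,\QF.
\]

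Next I would verify that $\MF_{\i}$ is genuinely of order $-1$, so that its order-$(-1)$ principal symbol $\mF_{\i,-1}$ is the correct object to discuss. The order-$0$ principal symbol of $\pb_{\i}(\KF)$ equals $\pb_{\i}(\kF_{0}(x,\x))$; the matrix $\kF_{0}(x,\x)$ has the three simple eigenvalues $\o_{-1},\o_{0},\o_{1}$, all of which are roots of $\pb_{\i}$, so $\pb_{\i}(\kF_{0})\equiv 0$ pointwise by the functional calculus for diagonalizable matrices. The same argument, with $\aF_{0}=\kF_{0}$ from \eqref{SymbolFin}, shows that $\BF_{\i}$ is also of order $-1$. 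I would then apply the composition rule for classical pseudodifferential operators: the orders of $\ZF,\MF_{\i},\QF$ being $\tfrac12,-1,-\tfrac12$, the product $\ZF\MF_{\i}\QF$ is of order $-1$ with principal symbol equal to the matrix product $\zF_{1/2}(x,\x)\,\mF_{\i,-1}(x,\x)\,\qF_{-1/2}(x,\x)$. Comparing with the operator identity above yields \eqref{sub}. Finally, since $\AF$ is self-adjoint on $L^{2}(\G)$ and $\pb_{\i}$ has real coefficients, $\BF_{\i}$ is self-adjoint, so its principal symbol $\bF_{\i,-1}$ is pointwise Hermitian; passing to principal symbols in $\QF\ZF=\mathrm{Id}$ gives $\qF_{-1/2}\zF_{1/2}=\Eb$, so \eqref{sub} rewrites as $\mF_{\i,-1}=\qF_{-1/2}\,\bF_{\i,-1}\,\zF_{1/2}$, which is the asserted similarity of $\mF_{\i,-1}$ to a Hermitian matrix.

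The only delicate point I anticipate is the order drop in the second step: one must be sure that $\MF_{\i}$ lands in exactly the order-$(-1)$ symbol class so that the composition rule returns precisely $\zF_{1/2}\mF_{\i,-1}\qF_{-1/2}$ without uncontrolled contamination from lower-order terms. This is guaranteed by the choice of $\pb_{\i}$ in \eqref{p(o)}, which annihilates every eigenvalue of $\kF_{0}$; the additional squared factors (beyond those in the minimal polynomial) only enforce this vanishing more strongly and do not interfere with the argument.
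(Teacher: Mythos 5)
Your argument is correct and follows essentially the same route as the paper: the exact conjugation identity $\BF_{\i}=\ZF\,\MF_{\i}\,\QF$ (equivalently $\MF_{\i}=\SF^{1/2}\BF_{\i}\SF^{-1/2}$), the composition rule for the order $-1$ principal symbols, and the Hermiticity of $\bF_{\i,-1}$ coming from the self-adjointness of $\BF_{\i}=\pb_\i(\AF)$. Your extra verification that $\pb_\i(\kF_0)\equiv 0$ forces the order drop of $\MF_\i$ to $-1$ is a detail the paper imports from \cite{RozNP1} but is a welcome addition, not a deviation.
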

\begin{proof} The pseudodifferential  operator $\MF_{\i}$ equals
\begin{equation}\label{similarity}
    \MF_{\i}=\SF^{\frac12}\BF_{\i}\SF^{-\frac12}= \SF^{\frac12}\pb_{\i}(\AF)\SF^{-\frac12},
\end{equation}
 therefore, for the principal symbols of $\MF_\i$ and $\BF_\i$, symbols of order $-1$ we have the equality,
\begin{equation}\label{simil.symbol}
    \mF_{\i,-1}=\qF_{-\frac12} \bF_{\i,-1} \zF_{\frac12}.
\end{equation}
It remains to recall that $\bF_{\i,-1}$ is a principal symbol of a self-adjoint operator, and therefore it is a Hermitian matrix.
\end{proof}

\subsection{General asymptotic formulas}

The operator $\MF_{\i}$ in \eqref{similarity} is a pseudodifferential operator of order $-1$, and its principal symbol, a $3\times 3$ matrix with real eigenvalues, is denoted by $\mF_{\i}(x,\x)$ (sometimes the subscript $-1$ is added in this notation in order to recall that this is a symbol of order $-1$.) Having this symbol at hand, the asymptotics of the eigenvalues of $\KF$, tending to $\o_\i$, can be found by the general result obtained in \cite{RozNP1}, Theor. 6.2.
 We reproduce here this Theorem, adapted to our particular case.

First, we introduce proper notations. For a point $\o_\i$ in the essential spectrum of $\KF$,  $n_{\pm}(\KF; \o_{\i},  \cdot)$ denotes the counting functions of eigenvalues of the operator $\KF$ in intervals near  $\o_{\i}$. Namely, we fix some reference points  $\t_{\pm}$  in such way that the interval $(\o_\i-\t_-, \o_\i+\t_+)$ contains no points of the essential spectrum of $\KF$, other than $\o_\i$. Then, for a small $\t>0,$ $n_{\pm}(\KF;\o_{\i}, \t)$ denotes the number of eigenvalues of $\KF$ in the interval $(\o_\i+\t, \o_\i+\t_+)$ for the '+' sign, resp., $(\o_\i-\t_-, \o_\i-\t)$ for the '-' sign. The object of our study is the behavior of $n_{\pm}(\KF;\o_{\i}, \t)$ as $\t\to 0$. It stands to reason that  $n_{\pm}(\KF;\o_{\i},\t)$ is bounded if there are only finitely many eigenvalues of $\KF$ in the corresponding (upper or lower) neighborhood of $\o_\i.$ On the other hand,
 if there are infinitely many eigenvalues of $\KF$ in such neighbourhood then these eigenvalues must converge to $\o_\i$ and therefore $n_{\pm}(\KF;\o_{\i}, \t)$ tends to infinity as $\t\to 0$. The change in the choice of the reference points $\t_{\pm}$ does not influence the rate of growth of $n_{\pm}(\KF;\o_{\i}, \t)$ as $\t\to 0$, therefore they are not reflected in our notations.

   Next, for a diagonalizable matrix $\mF$ with real eigenvalues, the expression $\Tr_\pm^{(2)}(\mF)$ denotes the \emph{sum of squares} of positive, resp.,  negative, eigenvalues of the matrix $\mF.$ Finally, $\pmb{\pmb{\o}}$ denotes the 1-form
$\pmb{\pmb{\o}}=\x_1d\x_2-\x_2d\x_1=d\theta$ in the polar co-ordinates on the unit circle $S^1\subset \texttt{T}_x \G$, $\x_1=\cos\theta, \x_2=\sin\theta$, $0\le\theta\le 2\pi$ and $d\Sc(x)$ denotes the area element for the surface measure induced by the embedding $\G\subset \R^3.$

\begin{theorem}\label{Th.BS} Let $\BF$ be an order $-1$ self-adjoint pseudodifferential operator on a manifold $\G$ of dimension 2, with principal symbol $\bF(x,\x),$ a Hermitian matrix. Then for the eigenvalues of $\BF$
the asymptotic formulas hold
\begin{equation}\label{BSas}
  n_\pm(\BF, 0,\t)\sim C^{\pm}(\BF,0)\t^{-2}, \t\to 0,
\end{equation}
\begin{equation}\label{Form.BS}
C^{\pm}(\BF,0)=2^{-1}(2\pi)^{-2}
\int_{\mathrm{S}^*\G}\Tr_{\pm}^{(2)}(\bF(x,\x))\pmb{\pmb{\o}} d\Sc(x),
\end{equation}
\end{theorem}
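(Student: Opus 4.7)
The plan is to prove this as the matrix-valued version of the classical Birman--Solomyak Weyl law for self-adjoint pseudodifferential operators of negative order on a closed manifold, by reducing to the scalar case on the spectral branches of the principal symbol.

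First I would dispose of lower-order contributions. Writing $\BF=\BF_0+\RF_1$, where $\BF_0$ has the pure, homogeneous of degree $-1$ principal symbol $\bF(x,\x)$ and $\RF_1$ is of order $\le -2$, the elementary Weyl bound for order $-2$ operators gives $s_n(\RF_1)=O(n^{-1})$, so its counting function is $O(\t^{-1})=o(\t^{-2})$. The Ky Fan / H.\,Weyl inequalities for eigenvalues of sums of self-adjoint operators then imply $n_\pm(\BF,0,\t)=n_\pm(\BF_0,0,\t)+o(\t^{-2})$, reducing the problem to $\BF_0$.

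The core step is microlocal diagonalization. Off the discriminant set $\Sigma\subset T^*\G\setminus 0$ on which two eigenvalues of $\bF$ coincide, the Hermitian symbol admits a smooth eigendecomposition $\bF=U\,\diag(\mu_1,\dots,\mu_\Nb)\,U^*$ with real eigenbranches $\mu_j(x,\x)$, homogeneous of degree $-1$. Quantizing $U$ to a zero-order unitary pseudodifferential operator conjugates $\BF_0$ into an operator that is block-diagonal modulo smoothing, with scalar diagonal blocks $B_j$ of order $-1$ and principal symbols $\mu_j$. To each block I apply the scalar Birman--Solomyak asymptotic
\begin{equation*}
n_\pm(B_j,0,\t)\sim (2\pi)^{-2}\,\meas\{(x,\x)\in T^*\G:\pm\mu_j(x,\x)>\t\},\qquad \t\to 0.
\end{equation*}
Passing to polar fibre coordinates $\x=r\omega$ with $\omega\in S^*_x\G$, using the homogeneity $\mu_j(x,r\omega)=r^{-1}\mu_j(x,\omega)$ and $d\x=r\,dr\,d\theta$, the fibre volume evaluates to $\tfrac12\t^{-2}\int_{S^*\G}([\pm\mu_j]_+)^2\,\pmb{\pmb{\o}}\,d\Sc(x)$. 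Summing over $j$ and recognising $\sum_j([\mu_j]_\pm)^2=\Tr^{(2)}_\pm(\bF)$ produces exactly \eqref{Form.BS}, with the prefactor $2^{-1}(2\pi)^{-2}$ and the angular form $\pmb{\pmb{\o}}=d\theta$ coming from the polar decomposition of $d\x$.

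The hard part will be the behaviour on the discriminant locus $\Sigma$, where the smooth diagonalization breaks down. For generic Hermitian symbols on a two-dimensional base $\Sigma$ has positive codimension in $S^*\G$ and is therefore of measure zero, so it cannot contribute to the integral in \eqref{Form.BS}; but rather than check this in each case, a robust workaround is a variational sandwich. For each $\e>0$, use the min-max principle to bound $n_\pm(\BF_0,0,\t)$ between counting functions of auxiliary pseudodifferential operators whose principal symbols have smooth positive/negative spectral projections obtained by mollifying the sign decomposition of $\bF$, agreeing with $\bF_\pm$ outside an $\e$-neighbourhood of $\Sigma$. The asymptotic constants for those auxiliaries differ from the target by $O(\e)$, and passing $\t\to 0$ before $\e\to 0$ delivers the stated limit without any explicit global diagonalization across $\Sigma$.
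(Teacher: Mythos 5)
First, a point of comparison: the paper does not prove Theorem~\ref{Th.BS} at all. It is quoted as a special case (dimension $2$, order $-1$) of the Birman--Solomyak results \cite{BS}, \cite{BSSib}, with pointers to Ponge \cite{Ponge} and Ivrii \cite{Ivrii} for accessible proofs; in those references the Weyl law is established for matrix-valued symbols directly, by variational/localization techniques that never require diagonalizing the symbol. Your proposal instead tries to manufacture the matrix case out of the scalar case by microlocal diagonalization. The bookkeeping in your polar-coordinate computation is correct and reproduces the constant $2^{-1}(2\pi)^{-2}$ and the form $\pmb{\pmb{\o}}=d\theta$, and the removal of the order $\le-2$ remainder via $s_n=O(n^{-1})$ and Ky Fan/Weyl inequalities is the standard Birman--Solomyak perturbation step. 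Note also a near-circularity: the scalar asymptotics you invoke as a black box is precisely the theorem of \cite{BS} whose matrix form you are proving, so the reduction buys little over simply citing \cite{BS} as the paper does.

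The genuine gap is the treatment of the degeneracy set $\Sigma$ where eigenvalues of $\bF(x,\x)$ cross. The theorem is stated for an arbitrary Hermitian principal symbol, so "for generic symbols $\Sigma$ has positive codimension" is not available: nothing in the hypotheses prevents $\Sigma$ from having positive measure in $\mathrm{S}^*\G$ (eigenvalue branches may merge on large sets), and your sandwich argument silently assumes the opposite. Concretely, the claim that the asymptotic coefficients of the mollified auxiliaries differ from the target by $O(\e)$ requires that the symbol modification supported in the $\e$-neighbourhood of $\Sigma$ contribute $o(1)$ to the integral of $\Tr_{\pm}^{(2)}$, i.e.\ that this neighbourhood shrink to a null set, which is neither a hypothesis nor proved; without it the limit $\t\to0$ followed by $\e\to0$ does not close. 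A second, unaddressed point is the global quantization of the diagonalizing family $U$: even off $\Sigma$ the eigenbundles of $\bF$ need not be trivial, so a single zero-order unitary pseudodifferential conjugator need not exist -- this is exactly the topological obstruction issue the paper flags via \cite{CR.Diag}. This is repairable (the asymptotic coefficient is local, so a partition of unity on $\mathrm{S}^*\G$ together with the Birman--Solomyak localization lemmas suffices, and only local diagonalization on each patch is needed), but as written the proposal asserts a global block-diagonalization it has no right to. Either supply the measure-theoretic and localization arguments, or do as the paper does and cite \cite{BS}, \cite{Ponge}, where the matrix case is handled without diagonalization.
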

\noindent(note that the zero in the notation  $n_\pm(\BF, 0,\t)$ refers to zero being the only point of essential spectrum for the compact operator $\BF.$)
By the results of \cite{RozNP1}, for the zero order NP operator $\KF$, the asymptotics of $n_{\pm}(\KF; \o_{\i},  \cdot)$ is determined by the asymptotics of eigenvalues of the order $-1$ operator $\MF_{\i}=\pb_{\i}(\KF)$ with the principal symbol $\mF_{\i}(x,\x)$.  Since the operator $\MF_{\i}$ is symmetrizable, it has the same eigenvalues as the self-adjoint operator $\BF_{\i}$. The principal symbol
 $\bF_{\i}(x,\x)$ of $\BF_{\i}$ is a Hermitian matrix with the same eigenvalues as $\mF_{\i}(x,\x)$. Therefore, Theorem \ref{Th.BS} can be made concrete in the following way.

 \begin{theorem}\label{GeneralAs.Theorem} Let $\KF$ be the elastic NP operator on a smooth closed surface $\G\subset \R^3.$ Then the asymptotics of eigenvalues of $\KF$, converging to the point $\o_{\i}$ of the essential spectrum, is described by the formula
 \begin{equation}\label{Formula3}
    n_{\pm}(\KF;\o_\i, \t)\sim C^{\pm}(\o_\i) \t^{-2}, \t\to +0,
    \end{equation}

    \begin{equation}\label{Formula4}
      C^{\pm}(\o_\i)=2^{-1}(2\pi)^{-2}\int_{\mathrm{S}^*\G}\Tr_{\pm}^{(2)}(\mF_{\i}(x,\x))\pmb{\pmb{\o}} d\Sc(x),
\end{equation}
where $\mF_\i=\mF_{\i,-1}$ is the principal symbol of the order $-1$ operator $\MF_\i=\pb_\i(\KF),$ see \eqref{complete symbol}.
\end{theorem}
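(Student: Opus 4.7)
The plan is to reduce the statement to a direct application of Theorem \ref{Th.BS}. The preceding material supplies three ingredients that, strung together, give the result: (i) the polynomial calculus reduction from \cite{RozNP1}, which converts the asymptotics of $\KF$ near $\o_\i$ into asymptotics of the compact, order $-1$ operator $\MF_\i = \pb_\i(\KF)$ near zero; (ii) the symmetrization machinery of Section \ref{Sect.symm}, which shows that $\MF_\i$ shares its spectrum with the self-adjoint order $-1$ operator $\BF_\i = \pb_\i(\AF)$; and (iii) Proposition \ref{prop.lower}, which identifies the principal symbols of $\MF_\i$ and $\BF_\i$ as pointwise similar matrices.

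First I would invoke the reduction from \cite{RozNP1}. Near $\l = \o_\i$ the polynomial $\pb_\i(\l)$ is an analytic bijection onto a neighborhood of $0$ with positive derivative $\pb'_\i(\o_\i) = \prod_{\i' \ne \i}(\o_\i - \o_{\i'})^2$, so counting eigenvalues of $\KF$ at signed distance $\t$ from $\o_\i$ is equivalent to counting eigenvalues of $\MF_\i$ at signed distance of order $\pb'_\i(\o_\i)\t$ from zero; the factor $[\pb'_\i(\o_\i)]^{-1}$ already built into \eqref{complete symbol} absorbs this Jacobian, so that no extraneous constants survive in the final $\t^{-2}$ formula. Next, the symmetrizability of $\KF$ (Proposition \ref{Prop.Positive} combined with the identity \eqref{Symmetr2}) gives $\MF_\i = \SF^{\frac12}\BF_\i\SF^{-\frac12}$, hence $\MF_\i$ and $\BF_\i$ have identical discrete spectra, and it suffices to determine the eigenvalue asymptotics of $\BF_\i$ at zero.

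Since $\BF_\i$ is a self-adjoint classical order $-1$ pseudodifferential operator on the compact two-dimensional manifold $\G$, Theorem \ref{Th.BS} applies verbatim. To rewrite its integrand in terms of the effective symbol $\mF_\i$ I would use Proposition \ref{prop.lower}, which gives
$$\bF_{\i,-1}(x,\x) = \zF_{\frac12}(x,\x)\,\mF_{\i,-1}(x,\x)\,\qF_{-\frac12}(x,\x),$$
with $\qF_{-\frac12}$ and $\zF_{\frac12}$ mutually inverse invertible matrices at every covector (being the principal symbols of $\SF^{1/2}$ and $\SF^{-1/2}$). Consequently $\bF_{\i,-1}(x,\x)$ and $\mF_{\i,-1}(x,\x)$ are conjugate, they have the same real eigenvalues, and $\Tr_\pm^{(2)}(\bF_{\i,-1}(x,\x)) = \Tr_\pm^{(2)}(\mF_{\i,-1}(x,\x))$ pointwise. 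Substituting into the Birman--Solomyak asymptotics for $\BF_\i$ and combining with the reduction from the previous paragraph yields \eqref{Formula3}--\eqref{Formula4}.

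The only subtle point in this argument is the bookkeeping of the scaling factor $\pb'_\i(\o_\i)$: one must verify that the normalization in \eqref{complete symbol} exactly compensates the linear change of spectral parameter $\l - \o_\i \mapsto \pb_\i(\l)$, with no residual power of $\pb'_\i(\o_\i)$ left in the $\t^{-2}$ coefficient. This verification is performed once for all in \cite{RozNP1}; once accepted, the proof is essentially a clean composition of Theorem \ref{Th.BS}, Proposition \ref{prop.lower}, and the invariance of eigenvalues under matrix similarity.
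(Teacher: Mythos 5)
Your proposal is correct and follows essentially the same route as the paper: reduction of the counting functions near $\o_\i$ to the order $-1$ operator $\MF_\i=\pb_\i(\KF)$ via the results of \cite{RozNP1}, passage to the self-adjoint $\BF_\i$ by symmetrization, identification of the eigenvalues of $\bF_{\i,-1}$ and $\mF_{\i,-1}$ through the similarity of Proposition \ref{prop.lower}, and then a verbatim application of the Birman--Solomyak asymptotics of Theorem \ref{Th.BS}. Your explicit remark that the factor $[\pb'_\i(\o_\i)]^{-1}$ in \eqref{complete symbol} absorbs the change of spectral parameter is a detail the paper delegates to \cite{RozNP1}, but it matches that reference and introduces no discrepancy.
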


In this way, the asymptotic formula \eqref{Formula3}, \eqref{Formula4} is derived, as a special case, for our  particular values of the dimension of the manifold and the order of the operator, of general results by M.Sh. Birman and M.Z. Solomyak, \cite{BS} and \cite{BSSib}.  The English translation of the (rather technical) proof in \cite{BS} is virtually inaccessible now, but an interested Reader can be directed  to a recent, soft analysis,  proof by R. Ponge \cite{Ponge}. This proof concerns a more restricted version of the general result in \cite{BS}, for a smooth surface and a homogeneous symbol, however it covers our situation.  This result can also be found in \cite{Ivrii}, however  the proof  (based upon the general theory developed in this book) is left there as an exercise to readers.

There is a simple but  important consequence of this general result.
\begin{proposition}\label{prop.sign}
  Suppose that at some point $(x_0,\x_0)\in \mathrm{S}^*(\G)$, one of eigenvalues of the matrix $\mF_\i(x_0,\x_0)$ is positive (negative). Then there exists an infinite sequence of eigenvalues of $\KF$ converging to $\o_\i$ from above (from below) satisfying the asymptotic formula \eqref{Formula3}, \eqref{Formula4} (with the corresponding sign), with a nonzero coefficient $C^{\pm}(\o_\i)$.
\end{proposition}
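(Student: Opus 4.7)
The plan is to read the conclusion straight off Theorem \ref{GeneralAs.Theorem}: it suffices to show that under the hypothesis the coefficient $C^{+}(\o_\i)$ (respectively $C^{-}(\o_\i)$) in the asymptotic formula \eqref{Formula4} is strictly positive, because then $n_{+}(\KF;\o_\i,\t)\sim C^{+}(\o_\i)\t^{-2}\to\infty$ as $\t\to +0,$ which forces infinitely many eigenvalues of $\KF$ in every interval $(\o_\i+\t,\o_\i+\t_+),$ and such accumulation can only occur at $\o_\i.$ In this way the whole proposition reduces to showing that a \emph{pointwise} positivity $\Tr_{\pm}^{(2)}(\mF_\i(x_0,\x_0))>0$ implies strict positivity of the integral in \eqref{Formula4}.

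To bridge this gap I would first recall from Proposition \ref{prop.lower} that $\mF_\i(x,\x)$ is similar, via the smoothly varying invertible symbol $\qF_{-\frac12}(x,\x),$ to the Hermitian matrix $\bF_{\i,-1}(x,\x).$ Consequently the three eigenvalues of $\mF_\i(x,\x)$ are real and coincide with those of $\bF_{\i,-1}(x,\x)$ at each $(x,\x)\in \mathrm{S}^*\G.$ Since $\bF_{\i,-1}$ depends smoothly on $(x,\x)\in\mathrm{S}^*\G$ and is Hermitian, its eigenvalues admit continuous selections on $\mathrm{S}^*\G,$ and in particular the function $\Tr_{\pm}^{(2)}(\mF_\i(\cdot,\cdot))$ is a continuous, non-negative function on the compact cosphere bundle.

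Assuming then that $\mF_\i(x_0,\x_0)$ has a strictly positive eigenvalue $\l_0,$ I would pick an open neighborhood $U\subset \mathrm{S}^*\G$ of $(x_0,\x_0)$ and a constant $c>0$ on which the corresponding continuous branch of this eigenvalue stays above $c.$ Then $\Tr_{+}^{(2)}(\mF_\i(x,\x))\ge c^{2}$ on $U,$ and since $\Tr_{+}^{(2)}(\mF_\i)\ge 0$ elsewhere, integration against the positive density $\pmb{\pmb{\o}}\,d\Sc(x)$ in \eqref{Formula4} gives $C^{+}(\o_\i)>0.$ Theorem \ref{GeneralAs.Theorem} then delivers the claimed infinite sequence with the stated asymptotics. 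The negative-eigenvalue case is verbatim the same with $+$ replaced by $-.$

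The only delicate point is the continuity of the branch carrying $\l_0$ near a possible eigenvalue crossing. I expect no real obstacle: either one invokes the classical fact that eigenvalues of a continuous family of Hermitian matrices can be continuously relabelled, or, more cheaply, one notes that $\Tr_{\pm}^{(2)}$ is already lower semicontinuous in the matrix entries (being a symmetric function of the real roots of a characteristic polynomial whose coefficients depend continuously on those entries), and lower semicontinuity combined with $\Tr_{+}^{(2)}(\mF_\i(x_0,\x_0))\ge \l_0^{2}$ already yields a neighborhood on which the integrand is bounded below by a positive constant, which is all the argument needs.
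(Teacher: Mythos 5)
Your argument is correct and follows essentially the same route as the paper: continuity of the (real) eigenvalues of the diagonalizable symbol $\mF_\i$ gives positivity of $\Tr_{\pm}^{(2)}(\mF_\i)$ on a neighborhood of $(x_0,\x_0)$, hence a strictly positive coefficient $C^{\pm}(\o_\i)$ in \eqref{Formula4}, and Theorem \ref{GeneralAs.Theorem} then yields the infinite sequence of eigenvalues. Your extra remarks on eigenvalue branches and lower semicontinuity of $\Tr_{\pm}^{(2)}$ only make explicit what the paper leaves implicit.
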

\begin{proof}Eigenvalues of a continuous diagonalizable matrix-function of $(x,\x)$ depend continuously on the variables $(x,\x)$. Therefore the eigenvalue in question is positive (negative) in a neighborhood of $(x_0,\x_0)$. Consequently, the integrand in \eqref{Formula4} with proper sign is positive on a set of positive measure. This means that the integral determining $C^{\pm}(\o_\i)$ is positive.
\end{proof}

Note that Proposition \ref{prop.sign} uses, in fact, only  that the coefficients $C^{\pm}(\o_\i)$ are integrals of sign-definite microlocal expressions involving the symbol, and not on the particular form of \eqref{Formula4}. In the next Section we express the conditions of Proposition \ref{prop.sign} in geometrical terms.

Formally the basic theorem by M.Birman and M.Solomyak, as  formulated in \cite{BS}, \cite{BSSib},  concerns only connected surfaces. However, obviously, in the case of  the surface $\G$ consisting of several connected components, $\G=\cup \G_\vk,$ which happens when the body $\Dc$ possesses cavities, the Neumann-Poincar\'e operator $\KF$ is the direct sum of pseudodifferential operators $\KF_{\vk}$ on the components $\G_\vk.$ The discrete spectrum of such sum is the union of the spectra of $\KF_{\vk}$.

\subsection{The two-sided eigenvalue  asymptotics}\label{discussion}
Although the expression in \eqref{Formula4} depends formally on the choice of local co-ordinates
on the surface $\G$ and the frame in the fiber, the integral in \eqref{Formula4}  and, moreover, the  $\pmb{\pmb{\o}}$-integral in \eqref{Formula4} over the cotangent  circle  are invariant under the change of an orthogonal co-ordinate system on $\G$ and under the change of the frame, as was established in \cite{RozNP1}.

The expression under the integral in \eqref{Formula4} is rather hard to handle. In fact, for the matrix $\mF_{\i,-1}$ depending on $x,\x$ -- as we see later, determined by the material characteristics and by the principal curvatures -- one needs to calculate the eigenvalues, separately the positive and negative ones, and then integrate over $\mathrm{S}^*(\G)$ certain expressions containing the squares of positive, resp., negative, eigenvalues. It seems that the task of finding treatable analytical expressions here is very hard. On the first step above, namely, finding the eigenvalues of a $3\times 3$ symbolic matrix, this means, solving a third degree algebraic equation with symbolic coefficients, is  rather hard. This equation has  three real roots, their expression involves cubic roots from complex numbers -- and formulas \eqref{Formula4} require further separating positive and negative ones, and then integrate the result -- altogether, this is highly   impractical. Formulas \eqref{Formula3}, \eqref{Formula4} may serve, probably, for numerical calculations as well as for the asymptotic analysis. These formulas can be, however, used for obtaining qualitative results for the properties of the NP eigenvalues under certain geometrical conditions, see Sections 4,5,6.  What we can, however, achieve more easily, is to find the explicit asymptotics for the distribution function for the \emph{union} of the sequences of eigenvalues converging  to  $\o_\i$, in other terms, for the sum of functions  $n_{\pm}(\KF;\o_\i,\t)$ counting the eigenvalues lying below and above $\o_\i$. It follows from the formulas for the asymptotics of the absolute values   of the  eigenvalues of  an order $-1$ pseudodifferential operator to which the spectral problem for the NP operator is reduced. This possibility is explained by a very fortunate relations between the order of the operator involved and of the dimension of the space. Such relation leads to the exponent in \eqref{Formula4} (which, in the general case, equals the dimension of the manifold divided by  minus the order of the operator) to be equal to  $2.$ And, fortunately, since the eigenvalues of the symbol $\mF_\i$ are real, it is  the trace of the square or the matrix symbol that expresses  the sum of squares of its eigenvalues  (an effect of similar kind is present also if we are interested in the sum of some even integer powers of the eigenvalues of the matrix, with a more complicated but still polynomial expression.)   Therefore, Theorem \ref{GeneralAs.Theorem}
has as a  consequence the following asymptotic formula.

\begin{theorem}\label{SingNumbers} Let the conditions of Theorem \ref{GeneralAs.Theorem} be satisfied and let $\mF_\i=\mF_{\i,-1}(x,\x)$ be the principal symbol of the operator $\MF_\i=\pb_{\i}(\KF).$ Then for the eigenvalues of $\KF$ the following asymptotics holds:
\begin{equation}\label{Form.l pm}
 n(\KF;\o_\i,\t)\equiv   n_+(\KF;\o_\i,\t)+n_-(\KF;\o_\i,\t)\sim C(\o_\i)\t^{-2},\, \t\to 0,
\end{equation}
where the coefficient $C(\o_\i)$ equals
\begin{equation}\label{coeff.pm}
  C(\o_\i)=C^+(\o_\i)+C^-(\o_\i)=2^{-1}(2\pi)^{-2}\int_{\mathrm{S}^*\G} \Tr (\mF_{\i,-1}(x,\x)^2) \pmb{\pmb{\o}} d\Sc (x).
\end{equation}
\end{theorem}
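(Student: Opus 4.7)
The plan is straightforward: simply add the two one-sided asymptotic formulas supplied by Theorem \ref{GeneralAs.Theorem} and simplify the resulting integrand. No new analytic machinery is required; the only content is an algebraic identity at the level of the symbol.

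First, Theorem \ref{GeneralAs.Theorem} gives
\[
n_{\pm}(\KF;\o_\i,\t)\sim C^{\pm}(\o_\i)\t^{-2},\qquad \t\to 0,
\]
with $C^{\pm}(\o_\i)=2^{-1}(2\pi)^{-2}\int_{\mathrm{S}^*\G}\Tr_{\pm}^{(2)}(\mF_{\i,-1}(x,\x))\,\pmb{\pmb{\o}}\,d\Sc(x)$. Summing these two relations immediately yields \eqref{Form.l pm} with $C(\o_\i)=C^{+}(\o_\i)+C^{-}(\o_\i)$, so the remaining task is to rewrite the integrand $\Tr_{+}^{(2)}(\mF_{\i,-1})+\Tr_{-}^{(2)}(\mF_{\i,-1})$ in a form not involving the sign splitting of the eigenvalues.

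Next I invoke Proposition \ref{prop.lower}: the matrix $\mF_{\i,-1}(x,\x)$ is similar, pointwise in $(x,\x)$, to the Hermitian matrix $\bF_{\i,-1}(x,\x)$ via $\mF_{\i,-1}=\qF_{-\frac12}\bF_{\i,-1}\zF_{\frac12}$. Consequently $\mF_{\i,-1}(x,\x)$ is diagonalizable and its three eigenvalues $\l_1,\l_2,\l_3$ are real. Writing $\mF_{\i,-1}=PDP^{-1}$ with $D=\diag(\l_1,\l_2,\l_3)$ gives $\mF_{\i,-1}^{2}=PD^{2}P^{-1}$ and therefore
\[
\Tr\bigl(\mF_{\i,-1}(x,\x)^{2}\bigr)=\sum_{j=1}^{3}\l_j(x,\x)^{2}.
\]
On the other hand, the very definition of $\Tr_{\pm}^{(2)}$ yields
\[
\Tr_{+}^{(2)}(\mF_{\i,-1})+\Tr_{-}^{(2)}(\mF_{\i,-1})=\sum_{\l_j>0}\l_j^{2}+\sum_{\l_j<0}\l_j^{2}=\sum_{j=1}^{3}\l_j^{2},
\]
since eigenvalues equal to zero contribute nothing to either side. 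Combining these two identities gives the pointwise equality $\Tr_{+}^{(2)}(\mF_{\i,-1})+\Tr_{-}^{(2)}(\mF_{\i,-1})=\Tr(\mF_{\i,-1}^{2})$ on $\mathrm{S}^*\G$; integrating against $\pmb{\pmb{\o}}\,d\Sc$ produces the stated formula \eqref{coeff.pm} for $C(\o_\i)$.

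There is no real obstacle here, but the conceptual point worth noting, emphasized in the introduction, is the following: while $\Tr_{\pm}^{(2)}$ individually require knowledge of the signs of the eigenvalues of a $3\times 3$ symbolic matrix, and thus lead to the irreducible-case cubic resolvent which cannot be handled symbolically, the \emph{sum} $\Tr(\mF_{\i,-1}^{2})$ is a \emph{polynomial} function of the matrix entries. This is exactly why the exponent $2$, coming from the ratio of the dimension of $\G$ to the order of $\MF_{\i}$, renders the combined asymptotics effectively computable from the structural description of $\mF_{\i,-1}$ established in \eqref{complete symbol}, even though the one-sided coefficients $C^{\pm}(\o_\i)$ themselves are not.
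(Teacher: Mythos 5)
Your proposal is correct and follows essentially the same route as the paper: sum the two one-sided formulas of Theorem \ref{GeneralAs.Theorem} and observe that, because $\mF_{\i,-1}$ is similar to the Hermitian symbol $\bF_{\i,-1}$ (Proposition \ref{prop.lower}) and hence diagonalizable with real eigenvalues, the identity $\Tr_{+}^{(2)}+\Tr_{-}^{(2)}=\Tr(\mF_{\i,-1}^{2})$ holds pointwise, which is exactly the paper's "fortunate" exponent-$2$ observation.
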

The geometrical meaning of the expression in \eqref{coeff.pm} will be discussed further on, in Section 4.

Thus, the problem remains of calculating the symbol $\mF_{\i,-1}(x,\x)$ (or its positive and negative eigenvalues) in \emph{some} co-ordinate system and frame. This freedom will be used essentially in the reasoning to follow.

In \cite{RozNP1}, when discussing the reduction of the spectral problem for a polynomially compact pseudodifferential  operator to the one for a compact operator, we considered also the case when the operator  $\mF_{\i,-1}$ has  everywhere vanishing principal symbol, $\mF_{\i,-1}(x,\x)\equiv 0.$ In this exceptional case, the coefficient in front of $\t^{-2}$ in the asymptotic formula \eqref{Formula3} vanishes for both signs; in this way \eqref{Formula3}  becomes non-informative.  Such case was called 'degenerate' in \cite{RozNP1}, and obtaining eigenvalue asymptotic formulas required  additional considerations, see Lemma 5.1 and Theorem 5.2 in \cite{RozNP1}.  However, in the only explicitly calculated case of the sphere, see \eqref{ball},  the results can be expressed as
\begin{gather}\label{ball.counting}
    n_{+}(\KF;0,\t)\sim \frac{9}{16}\t^{-2}, \,\t\to 0;\\\nonumber
    n_{-}(\KF; 0,\t)=0,\,;\\\nonumber
    n_{+}(\KF; -\km,\t)\sim(4\km)^2\t^{-2}, \t\to 0;\\\nonumber
    n_{-}(\KF; -\km,\t)=0; \\\nonumber
    n_+(\KF;\km,\t)\sim (4\km)^2\t^{-2},\t\to 0;\\\nonumber
    n_-(\KF;\km,\t)=0.
    \end{gather}
so this case is non-degenerate. We  will see  further on in this study that for the NP operator, the nondegenerate case  always occurs.

\section{The structure of the symbol $\mF_{\i,-1}$. Geometry considerations}\label{Sect.structure}

\subsection{Formulation}\label{formulation Structure}
Due to the results described in Sect.\ref{Sect.symm}, in order to find the coefficients in the asymptotic formula for eigenvalues, we need to calculate the effective principal symbols $\mF_{\i}(x,\x)\equiv\mF_{\i, -1}(x,\x),$ $\i=-1,0,1$, of the  order $-1$ pseudodifferential operators $\MF_{\i}=\pb_{\i}(\KF)$.  We aim now for avoiding the (very tedious) direct calculation of these symbols and their bulky and unwieldy expression using some \emph{a prori} symmetry properties. More detailed calculations follow in Sect.6.

  We have already started to investigate the  structure of the symbol $\mF_{\i}$ in Section 2. Here we are going to determine the character of dependence of this symbol on the geometrical characteristics of the surface. We denote by $\kb_1(\xb), \kb_2(\xb)$ the principal curvatures of the surface $\G$ at a point $\xb\in\G$. If a parametrization of $\G$ is chosen, the notation $\kb_1(x), \kb_2(x)$ is used, $x\in\O\subset \R^2$, as long as it does not cause a misunderstanding. Recall that the product $\kb_1(x) \kb_2(x)$ is the Gaussian curvature, $\frac{\kb_1(x)+\kb_2(x)}2$ is the mean curvature and $\left(\frac{\kb_1(x)+ \kb_2(x)}2\right)^2$ is the Willmore curvature of the surface. Note also that we have taken the co-ordinate $x_3$  directed along  the outward normal to $\G$, therefore the principal curvatures are \emph{negative}  at the points where the surface is convex.

We will call in this section a matrix-function $M(\x)$ \emph{universal}  if if it, in standard co-ordinates and frame, depends only on the covector $\x$ and the Lam\'e constants, but does not depend on the point on $\G$ where is is calculated. Later, in Sect.6, the term 'universal' will refere to functions which depend only on $\x.$

The crucial property we establish in this Section is the following.
\begin{theorem}\label{Result}
For any point $\xb^{\circ}\in \G,$ in the standardly chosen co-ordinates  on the surface $\G$ in a neighborhood of $\xbc$ and the corresponding frame in $\C^3$, the symbol $\mF_{\i}(x,\x)$ has at the point $\xbc$ the structure
\begin{equation}\label{SymbolStructure}
    \mF_{\i}(x,\x)=\kb_1(\xbc)M^{(1)}_{\i}(\x_1,\x_2)+\kb_2(\xbc)M_{\i}^{(2)}(\x_1,\x_2),
\end{equation}
with  universal matrices $M_{\i}^{(1)}(\x_1,\x_2)$, $M_{\i}^{(2)}(\x_1,\x_2)$, order $-1$ positively homogeneous in $\x$, depending on the Lam\'e constants $\l,\m$ but not depending on the surface $\G$.
\end{theorem}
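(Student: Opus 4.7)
The plan is to isolate, in the cumbersome sum \eqref{complete symbol}, the ingredients that are universal from those that carry the geometric information of $\G$ at $\xbc$. Each summand of type~1, see \eqref{type1}, is a product of four commuting factors $\kF_0-\o_{\vs_j}$ and one factor $\kF_{-1}$; each summand of type~2, see \eqref{type2}, contains three factors $\kF_0-\o_{\vs_j}$ together with one $\pd_{\x_\a}\kF_0$ and one $\pd_{x_\a}\kF_0$. In the standard co-ordinates and frame at $\xbc$, the principal symbol $\kF_0(\xbc,\x)$ has the explicit form \eqref{NPPrinc}, which is universal in the sense of the definition just introduced, and the same is manifestly true of $\pd_{\x_\a}\kF_0(\xbc,\x)$. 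Thus the entire geometric content of \eqref{complete symbol} sits in exactly two ingredients: the subsymbol $\kF_{-1}(\xbc,\x)$ and the $x$-derivative $\pd_{x_\a}\kF_0(\xbc,\x)$.

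The heart of the argument will be the following claim: both $\kF_{-1}(\xbc,\x)$ and $\pd_{x_\a}\kF_0(\xbc,\x)$ depend \emph{linearly} on the Hessian $H=(\pd_i\pd_j F)(0)$ of the local defining function $F$, with universal matrix-valued coefficients. I would prove this by Taylor-expanding every geometric quantity appearing in the Kupradze kernel \eqref{kernelNP} around $\xbc$. Since $F(0)=0$ and $\nabla F(0)=0$ in the standard co-ordinates, the outward normal $\nup(\yb)$, the induced surface measure $d\Sc$, the distance $|\xb-\yb|$, and the vector difference $\xb-\yb$ in the ambient frame admit Taylor expansions whose first non-trivial correction at $\xbc$ is linear in the entries of $H$, third and higher derivatives of $F$ entering only at a deeper order. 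Inserting these expansions into \eqref{kernelNP} and carrying out the standard computation of the symbol of a classical singular integral operator, one sees that only the $O(H)$ contribution survives in the order $-1$ part: terms involving higher derivatives of $F$ or quadratic expressions in $H$ push the order of the resulting pseudodifferential term strictly below $-1$. A parallel Weingarten-type bookkeeping controls $\pd_{x_\a}\kF_0(\xbc,\x)$, because moving the base point along $\G$ rotates the tangent frame precisely via the matrix $H$ in our co-ordinates.

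A convenient consistency check is the flat case $F\equiv 0$: the kernel \eqref{kernelNP} is then translation-invariant in $x-y$, so $\KF$ becomes a pure Fourier multiplier whose symbol is exactly \eqref{NPPrinc} with no lower-order correction. Consequently the constant (in $H$) part of both $\kF_{-1}(\xbc,\x)$ and $\pd_{x_\a}\kF_0(\xbc,\x)$ must vanish, and the dependence on $H$ is genuinely linear with zero constant term.

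Having established the linearity claim, I would specialize the standard co-ordinates so that the $x_1,x_2$ axes are aligned with the principal directions of $\G$ at $\xbc$; then $H$ is diagonal with eigenvalues $-\kb_1(\xbc)$ and $-\kb_2(\xbc)$, and linearity in the entries of $H$ becomes linearity in $\kb_1(\xbc),\kb_2(\xbc)$. Substituting back into each of the $25$ summands in \eqref{complete symbol}, where exactly one non-universal factor appears, and dividing through by the scalar $\pb'_{\i}(\o_\i)$ delivers the representation \eqref{SymbolStructure} with the two universal matrices $M^{(1)}_{\i}(\x_1,\x_2)$ and $M^{(2)}_{\i}(\x_1,\x_2)$. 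The main obstacle I anticipate is the rigorous verification that only the $O(H)$ part of the Taylor expansion of the kernel enters the order $-1$ subsymbol; this requires careful tracking of orders of homogeneity in the pseudodifferential calculus for the NP operator. The analogous bookkeeping for $\pd_{x_\a}\kF_0$ is the other delicate point, since the decomposition is sensitive to the convention used for transporting the frame; however, the invariance properties of the full sum \eqref{complete symbol} recorded in Sect.~\ref{Sect.symm} guarantee that any ambiguity cancels out after the $25$ terms are added.
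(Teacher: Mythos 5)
Your proposal is correct and follows essentially the same route as the paper: you isolate the geometric dependence of \eqref{complete symbol} to the two non-universal ingredients $\kF_{-1}(\xbc,\x)$ and $\partial_{x_\a}\kF_0(\xbc,\x)$, prove each is linear in the Hessian of $F$ with universal coefficients (Taylor expansion of the Kupradze kernel for the subsymbol, chain-rule/frame-transport bookkeeping for $\partial_{x}\kF_0$), and conclude via the fact that each of the $25$ summands contains exactly one such factor — which is precisely the paper's argument in Sections 4.2--4.6. The only quibble is a sign convention: with the paper's choice of the \emph{outward} normal one has $F(x)=\tfrac12(\kb_1 x_1^2+\kb_2 x_2^2)+O(|x|^3)$, so the Hessian equals $\diag(\kb_1,\kb_2)$ rather than $\diag(-\kb_1,-\kb_2)$, but this only rescales the universal matrices and does not affect the structure statement.
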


We would like to stress that the representation \eqref{SymbolStructure} is valid only in the specially selected co-ordinates system and frame at $\xbc$. These are chosen depending on the geometry of $\G$ near the point $\xbc$. However, recall, the eigenvalues of the symbol $\mF_{\i}(x,\x)$ do not depend on the co-ordinates chosen or the frame, so the integrand in \eqref{Formula4} is invariant under these changes.

We  present the proof of Theorem \ref{Result} further on in this section.
\subsection{C--co-ordinate systems}\label{subsect coord}
We choose near a point $\xb^\circ\in\G$ a special co-ordinate system, where the structure of the symbol is more treatable. We will call it the 'C--co-ordinates at $\xb^\circ$'. It is in this system that the representation \eqref{SymbolStructure} is valid.

Suppose first that $\xb^\circ$ is an umbilical point of the surface $\G$. Recall that a point on a smooth  surface  in $\R^3$ is called umbilical if the principle curvatures at this point coincide \begin{footnote}{It is a long-standing problem, the Carath\'eodory conjecture, concerning the minimal possible number of umbilical points on a surface. According to this conjecture, for any smooth closed surface with nonzero Euler characteristic, there must exist at least two umbilical points. The topological torus may have no umbilical points at all. For surfaces of a different topological type, the existence of at least one umbilical  point follows from simple topological considerations. For  analytic surfaces the conjecture was settled not long ago, see \cite{Gutierrez} and \cite{Ivanov} and the literature cited there. For a finite smoothness, and even for the infinite non-analytic one, the conjecture seems to be still unresolved.}\end{footnote}. For such a point, we direct the orthogonal  $x_1,x_2$ axes arbitrarily  in the tangent plane to $\G$ at $\xb^{\circ}$ and direct the  $x_3$ axis orthogonally to them, in the \emph{outward} direction at $\xb^{\circ}$.

If $\xb^{\circ}$ is \emph{not} an umbilical point, we direct $x_1,x_2$ axes along the lines of principal curvatures of $\G$ at $\xb^{\circ}$ and direct the $x_3$ axis along the outward normal at $\xb^{\circ}$.

In both cases, the surface $\G$ near $\xb^{\circ}$ is described by the equation $x_3=F(x_1,x_2)\equiv F(x)$ with $F(0,0)=0,$ so $\xbc=(0,0,0),$ and
\begin{equation}\label{Function F}
     \nabla F(0,0)=0, F(x_1,x_2)=\frac12(\kb_1(\xb^{\circ})x_1^2+\kb_2(\xb^{\circ})x_2^2)+O((x_1^2+x_2^2)^{3/2}),
\end{equation}
where $\kb_1(\xb^{\circ}),\kb_2(\xb^{\circ})$ are the principal curvatures of $\G$ at $\xb^{\circ},$  while $\kb_1(\xb^{\circ})=\kb_2(\xb^{\circ})$ at an umbilical point.  Note that in the non-umbilical case, the numbering of the principal curvatures matches the numbering of $x_1,x_2$ co-ordinates. This co-ordinate system will be called C--co-ordinates. The dual co-ordinates $\x=(\x_1,\x_2)$ in the cotangent space are directed along the same axes. The frame in the fiber $\R^3$ at the point $\xbc$ is chosen along the axes $x_1,x_2,x_3$.

In a neighborhood of a non-umbilical point, the curvature lines are smooth and the  C--co-ordinate systems chosen above depend smoothly on the point $\xb^{\circ}$. On the other hand, near an umbilical point, the curvature lines may behave  rather wildly, and the above co-ordinates system can depend on the base point fairly non-smoothly. Since we will need further on to trace the behavior of symbols under the change of the starting point $\xbc$,
we   adopt certain co-ordinate systems at points $\xt\in\G$ near $\xb^{\circ}$, arbitrarily, but consistent smoothly  with the C--co-ordinate system at $\xb^{\circ}$. Namely, for a point $\xt\in\G$ with co-ordinates $(x_1,x_2,x_3)\equiv(x,x_3(=F(x)))$ with respect to the C-co-ordinate system at $\xbc$, we consider the projection $P_{\xt}$ of the tangent plane at $\xb^{\circ}$,  $\mathrm{T}_{\xb^\circ}(\G)$, to the tangent plane  $\mathrm{T}_{\xt}(\G)$. The co-ordinates $y=(y_1,y_2)$ on $\mathrm{T}_{\xt}(\G)$ will be generated on $\mathrm{T}_{\xt}(\G)$ from $\mathrm{T}_{\xb^\circ}(\G)$  by this projection, with $y_3$ axis directed along the exterior normal at $\xt$ to $\G$. What follows from this construction, is that the Jacobi matrix of  this co-ordinate  transformation is, up to higher order terms as $\xt\to\xb^{\circ}$, the identity matrix $\Eb$ plus a term linear in first order derivatives of $F$. The first derivatives of this Jacobi matrix, by the chain rule, are matrices, linearly depending on the second derivatives of $F$.  Therefore, due to \eqref{Function F}, the derivatives of the Jacobi matrix  are the principal curvatures $\kb_1(\xb^{\circ}),\kb_2(\xb^{\circ})$  at $\xb^{\circ}$, which enter with universal coefficients. We direct the vectors in the frame of the fibre of the bundle at $\xt$ along the co-ordinate axes. The derivatives of the transformation matrix of the fiber to the standard frame at $\xb^{\circ}$   are, again, linear forms of  $\kb_1(\xb^{\circ}),\kb_2(\xb^{\circ})$ with universal coefficients, up to some higher order terms, as $\xt\to\xb^{\circ}$.

\subsection{The symbol $\mF_\i$ and the principal curvatures.}
The key point in the reasoning to follow is the fact that the symbols we obtain are linear forms of principal curvatures with coefficients depending on the co-variables $\x=(\x_1,\x_2)$ and the Lam\'e constants $\l,\m$ but not on the point $\xb^{\circ}$. We call such coefficients 'universal'.  Our considerations will be based upon the analysis of the structure of various terms in the expansion of the principal symbol $\mF_{\i,-1}$. The general idea is the following. If we have a function $\F(z),$ depending on some parameters $\l,\m,\x$, and $z=F(x)$ is a function on $x\in\R^d$ such that $F(0)=0, \nabla F(0)=0,$ then, according  to the chain rule,  the iterated gradient $\nabla_{x}^2 \F(F(x))|_{x=0}$ is a linear (matrix) form of second derivatives of $F$ at zero, with coefficients depending only on $\l,\m,\x$ and $\F''_z(0),$ this means with universal coefficients.

We pass to the study of the symbol $\mF_\i(x,\x)$ (the principal, order $-1$, symbol of the operator $\MF_{\i}$). Recall that this symbol is constructed following the rules \eqref{type1}, \eqref{type2}, \eqref{complete symbol}.
The expression \eqref{complete symbol} is a sum of 25 terms.
These terms involve the principal symbol $\kF_0$ of the operator $\KF$,  its first order derivatives in $x$ and $\x$ and, finally, the
subsymbol $\kF_{-1}$ of $\KF$.  It was explained in Sect.2 that relations \eqref{type1}, \eqref{type2}, \eqref{complete symbol} show that each of 25 additive terms  in the expression for   $\mF_{\i}$  contains only one factor of  order $-1$, all the remaining factors having order $0$. Since we may perform our calculations in any co-ordinate system by our choice, we will study $\mF_{\i}$ in the C--co-ordinate system centered at the point $\xb^{\circ}$.

The symbol $\kF_0$ does not depend on the geometry of $\G$, as can be seen in \eqref{NPPrinc}. So, the only way how $\mF_{\i}$ can depend on the geometry is via $\nabla_x \kF_0, \nabla_{\x}\kF_0,$ and $\kF_{-1}.$
\subsection{Dependence on the geometry of $\G$. 1. $\nabla_x \kF_0, \nabla_{\x}\kF_0$}\label{subsection.deriv.Princ}
Here and in the next subsection we determine which characteristics of the surface  may be present in the expression of the terms in \eqref{complete symbol}. The unwieldy explicit formulas are not needed at the moment (they will be discussed in Sect.6 in more detail).  First, we can see in \eqref{NPPrinc}, that the expression for the  symbol $\kF_0$ does not involve any dependence  on $\xbc$, therefore, the same is true for the $\x$ derivatives of $\kF_0$ (these derivatives can be calculated directly from \eqref{NPPrinc}, but we will not do this at the moment).

To evaluate $\nabla_{x}\kF_0$ at the point $\xb^{\circ}\in\G$, we take another point $\xt\in\G$, in a neighborhood of $\xb^{\circ}$. We consider also the C--co-ordinate system centered at $\xb^{\circ}$ and the consistent system centered at $\xt$,  as explained in Section \ref{subsect coord}. We will mark by the superscript $\circ$ the principal symbol $\kF_0$ and other objects expressed in the $\xb^{\circ}$-- centered system and by $\bullet$ the same objects, but expressed in the $\xt$- centered system.

In this notation, we are interested in the derivative $\nabla_{x}\kF_0^{\circ}(x,\x)$ calculated at the point  $\xb^{\circ}$,   i.e., at $x=0$. Thus we study the behavior of the principal symbol as $\xb^{\bullet}$ approaches $\xb^\circ$.  We denote by
$Z=Z_{\xb^{\bullet}}$ the variables change  on $\G$ in a  neighborhood of $\xb^\circ$ from the $\xb^\circ$-centered co-ordinates to the $\xt$-centered  ones. The Jacobi matrix $DZ=DZ_{\xbc}$ of this transformation  contains the first order derivatives of the function $F$  at $\xt$. The transformation $U(\xt)\in GL(\R,3)$ from the $\xb^{\circ}$-frame to the $\xt$-frame depends linearly on the first order derivatives of $F$ at  $\xt$ as well.

 We use now  the classical rule of transformation of the symbol under the change of variables and the natural rule of transformation under  the change of the basis in the fiber. Namely, we write the symbol in $\xt$-centered co-ordinates -- it will have the same form as \eqref{NPPrinc} -- and then transform it to $\xb^{\circ}$-centered   co-ordinates.  In this way, we have at the point $\xt$
\begin{gather*}%\label{symbK1}
  \kF_0^{\circ}(x,\x)  =  U(\xt)\kF^{\bullet}_0(Z(\xb^{\bullet}),((DZ)^{-1})^\top \x)U(\xt)^{-1}=\\ \nonumber U(\xt)\frac{i\km}{|\x|}\begin{pmatrix}
                    0 & 0 & -\y_1 \\
                    0 & 0 & -\y_2 \\
                    \y_1 & \y_2 & 0 \\
                  \end{pmatrix}U(\xt)^{-1},
\end{gather*}
 with $\y\equiv(\y_1,\y_2)=((DZ)^{-1})^\top \x.$ We recall here that the variables change $Z(\xt)$, its differential $DZ(\xt)$, and the linear transformations $U(\xt)$ depend smoothly on the first order derivatives of the function $F(x_1,x_2)$, moreover they become identity maps as $\xt\to\xb^{\circ}$, since the derivatives of $F$ vanish ay $\xb^\circ$. Therefore, the derivatives of $\kF_0^{\circ}(x,\x)$ at $\xb^\circ$, by the chain rule, depend linearly on the second derivatives of $F$ at $\xb^\circ$, with no more characteristics of $F$ involved. Since the co-ordinates $x_1,x_2$ have been chosen along the curvature lines of $\G$ at $\xb^{\circ}$, the mixed second derivative of $F$ vanishes, while the pure second derivatives are equal to the principal curvatures of the surface at the point $\xb^\circ$, moreover, this dependence is linear. Thus, we have established that
 \begin{equation*}%\label{struct1}
    \nabla_{x}\kF_0^{\circ}(x,\x)_{x=0}=\km(\kb_1(\xb^{\circ})f_1(\x)+\kb_2(\xb^{\circ})f_2(\x))
 \end{equation*}
 in the C--co-ordinate system centered at $\xb^{\circ}$, with some (matrix) symbols $f_1,f_2$ of order $0$ depending on $\x$ (and the Lam\'e constants) only. The same conclusion holds at umbilical points, where $\kb_1(\xb^{\circ})=\kb_1(\xb^{\circ})$, by a similar reasoning.

\subsection{Dependence on the geometry of $\G$. 2. $\kF_{-1}(x,\x)$}\label{subsection.subsymbol}

 In order to find the required representation for the subsymbol $\kF_{-1}(x,\x)$ of the operator $\KF$, it is more convenient to  work not with the symbol but with  the kernel of  the integral operator.

We consider  the local expression \eqref{kernelNP} for the NP operator.  Having the point $\xb=\xb^\circ $ (the point $x=0$ in the  C-co-ordinate system centered at $\xbc)$ fixed, we expand all entries of the  kernel $\Kc$ in the asymptotic  series in terms, positively homogeneous in $y-x$.  We are interested in the first two terms in this expansion in the form
\begin{gather}\label{KrnelExpnsion}
     \Kc(\xb,\yb)= \Kc_0(x,x-y)+\Kc_{-1}(x,x-y) +O(1); \\ \nonumber  \xb=(x,F(x))=0\in \G,\,  \yb=(y,F(y))\in\G,
\end{gather}
where $\Kc_0(x,x-y)$ is order $-2$ positively homogeneous and odd in $(x-y)$ and $\Kc_{-1}(x,x-y)$ is order $-1$ positively homogeneous in $(x-y)$.  In order to find these terms, we consider the expansion for separate terms in \eqref{kernelNP}. Here we keep in mind the Taylor expansion for the function $F$ near $\xb^{\circ}$,
 $F(x)=\frac{1}{2}(H(\xb^{\circ}) x,x)+O(|x|^3)$, $x\to 0$, where $H=\diag(\kb_1(\xbc),\kb_2(\xbc))$.   Next, by our choice of co-ordinates,  the co-ordinate axes $x_1,x_2$ lie along the eigenvectors of the matrix $H$ (for  an umbilical point, i.e., when $H$ is a multiple of the unit matrix, any orthogonal directions may be chosen.) In this co-ordinate system, the first fundamental form of the surface $\G$ at $\xb^\circ$ is the identity one,
 \begin{equation*}
 \Ib[\G]_{\xb^\circ}(dx)=|dx|^2.
 \end{equation*}
  The second fundamental form for this surface at $\xb^\circ$ is  diagonal in this co-ordinate system:
  \begin{equation*}
   \Ib\Ib[\G]_{\xb^\circ}(dx)=\kb_1(\xb^\circ)(dx_1)^2+\kb_2(\xb^\circ)(dx_2)^2,
  \end{equation*}
  calculated, recall, with the direction of the normal vector chosen to be the outward one, so it is \emph{negative} at those points where the surface is convex.
 For the entries in the kernel of the integral operator $\KF$, at the point  $\xb^{\circ}$ with co-ordinates $(x,F(x))=(0,0)$ in the chosen co-ordinate system,
 we use the standard relations for co-ordinates in this system. Namely, for the components of the normal vector, we have
 \begin{equation}\label{normal.1}
 \n_\a(y)=\n_\a(0)+\kb_\a(\xb^{\circ}) y_\a+ O(|y|^2)= \kb_\a(\xb^{\circ}) y_\a+ O(|y|^2), \a=1,2,
 \end{equation}
 and
 \begin{equation}\label{normal.2}
   \n_3(y)=1-O(|y|^2).
 \end{equation}
The distance between points, entering in \eqref{kernelNP}, is found as
 \begin{equation}\label{distance}
 |\xb^\circ-\yb|^2=|x-y|^2\left(1+2\frac{\Ib\Ib[\G]_{\xb^\circ}(x-y)^2}{|x-y|^2}\right)+o(|x-y|^4).
 \end{equation}

 We substitute  \eqref{normal.1}, \eqref{normal.2}, \eqref{distance} in \eqref{kernelNP} and obtain that the leading  term $\Kc_{-1}$ in the singularity as $y\to x$ of the kernel $\Kc_{-1}(y,x-y)$  is a  linear function of the principal curvatures.
\subsection{Proof of Theorem \ref{Result}}\label{proofOfStructure}
Finally, we take into account the structure properties of the symbol $\mF(x,\xi)$ as it depends on the principal symbol and  the subsymbol of the operator $\KF$, see \eqref{type1}, \eqref{type2}, \eqref{complete symbol}. At a given point, in C--co-ordinates, for each summand in \eqref{complete symbol}, the factors  $\kF_0$ do not depend on the geometry of $\G$. Exactly one   factor is present in each summand that involves  the geometry of $\G$, and this term is linear in the principal curvatures. All other terms in the products in \eqref{complete symbol} do not involve geometric characteristics of the surface, and therefore are universal matrices (depending, of course, on the material constants and the direction of the covector $\x\in \mathrm{T}_x^*\G$.)  Therefore, each summand, being the product of 5 factors, and further on,  the whole symbol $\mF_{\i}(x,\x)$, depend linearly on the principal curvatures of $\G$, with universal coefficients. This concludes the proof of Theorem \ref{Result}.

\section{Symmetries, reductions, and curvatures in asymptotic formulas}\label{Symmetries}
In this section we start applying the results about  the spectrum of general polynomially compact pseudodifferential operators, obtained in \cite{RozNP1}, see Theorem \ref{GeneralAs.Theorem}, to study the asymptotics of eigenvalues of the NP operator. At this stage we will see that some properties of the spectrum can be derived by means of qualitative considerations, without calculating the symbols explicitly. Later, we will present more detailed spectral properties based upon explicit calculations.
\subsection{Symmetries}

The first property follows from the fact that the symbol $\kF(x,\x)$ should transform in a definite way  as soon as we permute the co-ordinate axes $x_1$ and $x_2$. We choose the principal curvatres in the convenient way. Namely, let $\kb_1=\kb\ne 0, \kb_2=0$. Then, by \eqref{SymbolStructure},

\begin{equation}\label{perm0}
\mF_{\i}(x,\x_1,\x_2)=\kb M_{\i}^{(1)}(\x_1,\x_2).
\end{equation}
Now, let $\kb_1=0,\,\kb_2=\kb.$ The corresponding symbol must be the same as \eqref{perm0}, after the following transformations caused by the permutation of co-ordinates:
\begin{enumerate}
\item the covariables $(\x_1,\x_2)$ must be permuted, $(\x_1,\x_2) \leftrightarrows (\x_2,\x_1); $
\item $M_{\i}^{(1)}$ should be replaced by $M_{\i}^{(2)}$;
\item the frame in $\C^3$ changes, therefore, the first two horizontal rows should be permuted as well as two first columns.  This is obtained by the transformation $\mF\leftrightarrows V^{-1}\mF ,$ where $V$ is
 the linear unitary transformation in $\C^3$ interchanging the first row with the second one,
 i.e., the matrix
 \begin{equation*}%\label{matrixV}
    V=
    \left(
      \begin{array}{ccc}
        0 & 1 & 0 \\
        1 & 0 & 0 \\
        0 & 0 & 1 \\
      \end{array}
    \right),\, V=V^{-1}.
 \end{equation*}
\end{enumerate}
So, we obtain
\begin{equation*}
 \mF_{\i}(x,\x_1,\x_2)= \kb M_{\i}^{(1)}(\x_1,\x_2)=\kb V^{-1}M_{\i}^{(2)}(\x_2,\x_1)V,
\end{equation*}
therefore, if we denote  $\hat{\x}\equiv \widehat{(\x_1,\x_2)}=(\x_2,\x_1)$
\begin{equation}\label{perm.fin}
  M_{\i}^{(2)}(\x_1,\x_2)=V^{-1}M_{\i}^{(1)}(\x_2,\x_1)V.
\end{equation}
As a result, the symbol $\mF_{\i}(x;\x_2,\x_1)$ must depend on \emph{only one} universal matrix $M_\i(\x_1,\x_2)=M_\i^{(1)}(\x_1,\x_2),$
and we obtain, for a general body, with principal curvatures $\kb_1(x), \kb_2(x),$
\begin{equation}\label{perm.superfin}
  \mF_{\i}(x,\x)=\kb_1(x)M_{\i}(\x)+\kb_2(x)V^{-1}M_{\i}(\hat{\x})V.
\end{equation}

We consider now the special case of $\G$ being the unit sphere  $S^2$ in $\R^3$.  All points on $\G$ are umbilical, moreover, $\kb_1(x)=\kb_2(x)=-1$ everywhere on $\G$. We can choose the local (orthogonal) co-ordinates in an arbitrary way. Therefore, the symbol $\mF_\i(x,\x)$ for the sphere equals
\begin{equation}\label{prop.3}
    \mF_{\i}(x,\x)=-M_{\i}(\x)- V^{-1}M_{\i}(\widehat{\x})V.
    \end{equation}
    For the sphere, the eigenvalues of the symbol $ \mF_{\i}(x,\x)$ are the same for all points $x$, and,  accordingly, the integrand in \eqref{Formula4} is independent of the point $x.$  Therefore, the asymptotic formula  \eqref{Formula3}, \eqref{Formula4} gives the following expression for the coefficient $C^{\pm}(\o_\i)$ for the sphere:

\begin{gather}\label{coeff.Sphere}
   C^{\pm}(\o_\i)=2^{-1}(2\pi)^{-2}\int_{\mathrm{S}^*\G}\Tr_{\mp}^{(2)}(M_{\i}(\x)+V^{-1}M_{\i}(\hat{\x})V)\pmb{\pmb{\o}} d\Sc(x)\\\nonumber
   =(2\pi)^{-1}\int_{S^1}\Tr_{\mp}^{(2)}(M_{\i}(\x)+V^{-1}M_{\i}(\hat{\x})V)\pmb{\pmb{\o}}
\end{gather}
(note the sign:  $\Tr_{\mp}^{(2)}$ enters  in the formula, since the curvature $\kb_1=\kb_2$ equals $-1$).

We recall now the explicit formulas for the eigenvalues of the NP operator on the sphere, see \eqref{ball}. These formulas show that the eigenvalues of $\KF$  approach each of three points of the essential spectrum \emph{from above}. Therefore the coefficients $C^-(\o_\i)$ in \eqref{Formula3} vanish.  This,  by Theorem \ref{Result}, means that the integrand in the last line in \eqref{coeff.Sphere} eigenvalue  asymptotic formula  \eqref{Formula4}, $\Tr_{\mp}^{(2)}(M_{\i}(\x)+V^{-1}M_{\i}(\hat{\x})V)$  is always positive for $-$ sign and it is everywhere zero for the $+$ sign. Therefore, the matrix $(M_{\i}(\x)+V^{-1}M_{\i}(\hat{\x})V)$ for each $\i$, is nonpositive for all $\x\in S^1$, and at least for some $\x$ at least one of eigenvalues is strictly negative.

\subsection{Curvatures and the eigenvalue asymptotics}
The property of $(M_{\i}(\x)+V^{-1}M_{\i}(\hat{\x})V)$ being non-positive does not automatically imply that the matrix $M_{\i}(\x)$ is non-positive -- it is easy to construct a counterexample.  Nevertheless, certain important results can be derived from it.

First of all, we note that the trace $\tr(M_{\i}(\x)+V^{-1}M_{\i}(\hat{\x})V)$ is non-positive for all $\x\in S^1$ and strictly negative for some $\x$. Since $\tr(V^{-1}M_{\i}(\hat{\x})V))=\tr M_{\i}(\hat{\x}),$ we have
\begin{equation*}
\tr M_{\i}(\x)+\tr M_{\i}(\hat{\x})\le 0.
\end{equation*}
 and at least at one point $\x\in S^1$ this trace is negative. It follows that for such $\x$ at least one of eigenvalues of the matrix $M_{\i}(\x)$ is strictly negative.

Now we use the fact that for any bounded body $\Dc\subset\R^3$ with smooth boundary $\G$ there exists a point $\xb^\circ$ where \emph{both principal curvatures are strictly  negative} -- see Section \ref{geom.subsect}.
 Then the trace
\begin{equation}\label{trace}
  \tr(\mF_{\i}(x,\x))=\kb_1(\xb^\circ)\tr(M_{\i}^{(1)}(\x))+\kb_2(\xb^\circ)\tr(M_{\i}^{(2)}(\x))
\end{equation}
is  strictly positive for some $\x$ and, consequently, at least one of eigenvalues of $\mF_{\i}(x,\x)$ is strictly negative. Therefore, as explained in Proposition \ref{prop.sign}, the coefficient $C_{\i}^+$ does not vanish.  Since on every compact smooth surface in $\R^3$, such point $\xb^\circ$ exists, this gives us the following result for the eigenvalues of the NP operator.

\begin{theorem}\label{thm.Convex} For any body $\Dc$, the coefficients $C^+(\o_\i)$ in \eqref{Formula4} are strictly positive; therefore there exist infinitely many eigenvalues of the NP operator $\KF$ approaching the points $\o_\i$ of the essential spectrum \emph{from above}, and they satisfy the asymptotic law \eqref{Formula3}, \eqref{Formula4} with 'plus' sign.
\end{theorem}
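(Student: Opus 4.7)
The plan is to combine three ingredients already in place in this section: the structural representation \eqref{perm.superfin} of the effective symbol in the $C$--co-ordinate system,
\[
\mF_{\i}(\xb^\circ,\x) = \kb_1(\xb^\circ)M_{\i}(\x) + \kb_2(\xb^\circ)V^{-1}M_{\i}(\hat\x)V;
\]
the consequence of the explicit sphere spectrum \eqref{ball}, namely that the combination $M_{\i}(\x)+V^{-1}M_{\i}(\hat\x)V$ is nonpositive for every $\x\in S^1$ and strictly negative for some $\x_0$, so that (using cyclicity of trace)
\[
\tr M_{\i}(\x)+\tr M_{\i}(\hat\x)\le 0,\quad \text{with strict inequality at }\x_0;
\]
and the geometric fact, deferred to Subsection~\ref{geom.subsect}, that every bounded body $\Dc\subset\R^3$ with smooth boundary possesses a point $\xb^\circ\in\G$ at which \emph{both} principal curvatures are strictly negative (classically, take a point of maximal distance from any interior reference point).

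With these in hand, I would evaluate the trace of $\mF_{\i}$ at such a point $\xb^\circ$ and apply a symmetrization trick to neutralize the fact that $\kb_1(\xb^\circ)$ and $\kb_2(\xb^\circ)$ need not coincide. Using $\hat{\hat\x}=\x$, formula \eqref{trace} gives
\[
\tr\mF_{\i}(\xb^\circ,\x_0)+\tr\mF_{\i}(\xb^\circ,\hat\x_0)=\bigl(\kb_1(\xb^\circ)+\kb_2(\xb^\circ)\bigr)\bigl(\tr M_{\i}(\x_0)+\tr M_{\i}(\hat\x_0)\bigr).
\]
Both factors on the right are strictly negative, so the right-hand side is strictly positive, and therefore at least one of $\tr\mF_{\i}(\xb^\circ,\x_0)$ or $\tr\mF_{\i}(\xb^\circ,\hat\x_0)$ is strictly positive.

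By Proposition~\ref{prop.lower} the matrix $\mF_{\i}(x,\x)$ is similar to a Hermitian one, so its eigenvalues are real; since their sum at the point just produced is positive, at least one eigenvalue of $\mF_{\i}$ at that point of $\mathrm{S}^*\G$ must be strictly positive. Proposition~\ref{prop.sign} then delivers the theorem: the coefficient $C^+(\o_{\i})$ from the general asymptotic formula \eqref{Formula4} is strictly positive, and an infinite sequence of eigenvalues of $\KF$ converges to $\o_{\i}$ from above obeying the asymptotic law \eqref{Formula3}.

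The main obstacle, and the reason the structural Theorem~\ref{Result} and the permutation identity \eqref{perm.fin} are indispensable here, is precisely the non-equality of $\kb_1$ and $\kb_2$ at the convex point: the combination $\kb_1\tr M_{\i}(\x)+\kb_2\tr M_{\i}(\hat\x)$ is not \emph{a priori} sign-definite even though $\tr M_{\i}(\x)+\tr M_{\i}(\hat\x)$ is. The $\x\leftrightarrow\hat\x$ symmetrization circumvents this obstruction by reducing the question to the product of two strictly negative numbers. The remaining steps — the real spectrum of $\mF_{\i}$ via symmetrizability, the invariance of the integrand in \eqref{Formula4} under changes of local co-ordinates and frame, and the existence of a strictly convex point — are bookkeeping, with the last being purely classical differential geometry.
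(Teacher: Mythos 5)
Your proof is correct and takes essentially the same route as the paper: the sphere data forces $M_{\i}(\x)+V^{-1}M_{\i}(\hat{\x})V$ to be nonpositive with strictly negative trace at some $\x_0$, a strictly convex boundary point exists, and a positive eigenvalue of $\mF_{\i}$ at that point gives $C^{+}(\o_\i)>0$ via Proposition \ref{prop.sign}. Your explicit $\x\leftrightarrow\hat{\x}$ symmetrization simply makes precise the step the paper asserts without detail (that $\kb_1\tr M_{\i}(\x)+\kb_2\tr M_{\i}(\hat{\x})$ is strictly positive for some $\x$), and it also corrects the sign slip in the paper's wording, which says a ``strictly negative'' eigenvalue where a strictly positive one is meant.
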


On the other hand, if there exists a point at the boundary where it is concave, the existence of eigenvalues converging to $\o_\i$ from below is granted. More exactly,

\begin{theorem}\label{thm.concave}
  Let the boundary $\G$ of the body $\Dc$ have at least one point  such that both principal curvatures are non-negative while at least one is strictly positive. Then the coefficients $C^{-}(\o_\i)$
 in \eqref{Formula4} is positive, this means that there are infinitely many eigenvalues of $\KF$ approaching $\o_\i$ from below, and they satisfy the asymptotic formula \eqref{Formula3} with '-' sign.
\end{theorem}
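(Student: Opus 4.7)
The plan is to follow the blueprint of the proof of Theorem \ref{thm.Convex}, with the roles of positive and negative eigenvalues interchanged, using the hypothesized concave point in place of the convex point that exists on every closed surface. By Proposition \ref{prop.sign}, it will suffice to exhibit a single covector $\x_0$ at the prescribed point $\xbc\in\G$ such that $\mF_\i(\xbc,\x_0)$ has at least one strictly negative eigenvalue. I will secure this by showing that the trace of $\mF_\i(\xbc,\x_0)$ is strictly negative, for a suitably chosen $\x_0$.

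First I would work in the C--co-ordinate system centered at $\xbc$, so that the reduction \eqref{perm.superfin} is available:
\begin{equation*}
\mF_\i(\xbc,\x)=\kb_1(\xbc)\,M_\i(\x)+\kb_2(\xbc)\,V^{-1}M_\i(\hat{\x})V.
\end{equation*}
Setting $f(\x):=\tr M_\i(\x)$ and using the cyclic invariance of the trace gives
\begin{equation*}
\tr\mF_\i(\xbc,\x)=\kb_1(\xbc)\,f(\x)+\kb_2(\xbc)\,f(\hat{\x}).
\end{equation*}
From the analysis of the sphere carried out in Subsection \ref{Symmetries}, the matrix $M_\i(\x)+V^{-1}M_\i(\hat\x)V$ is non-positive for every $\x\in S^1$ and strictly negative (i.e.\ possesses a strictly negative eigenvalue) at some direction $\x^*$. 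Taking the trace of this inequality yields $f(\x)+f(\hat\x)\le 0$ for all $\x$, with $f(\x^*)+f(\widehat{\x^*})<0$.

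The key observation is then that $\tr\mF_\i(\xbc,\cdot)$ can be symmetrized under $\x\leftrightarrow\hat{\x}$:
\begin{equation*}
\tr\mF_\i(\xbc,\x^*)+\tr\mF_\i(\xbc,\widehat{\x^*})=\bigl(\kb_1(\xbc)+\kb_2(\xbc)\bigr)\bigl(f(\x^*)+f(\widehat{\x^*})\bigr).
\end{equation*}
By hypothesis $\kb_1(\xbc)+\kb_2(\xbc)>0$, so the right-hand side is strictly negative, and accordingly at least one of $\tr\mF_\i(\xbc,\x^*)$, $\tr\mF_\i(\xbc,\widehat{\x^*})$ must be strictly negative. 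Taking $\x_0$ to be whichever of $\x^*,\widehat{\x^*}$ realizes this gives a covector at which $\mF_\i(\xbc,\x_0)$ has a negative eigenvalue, and Proposition \ref{prop.sign} then yields both the infinitude of eigenvalues of $\KF$ converging to $\o_\i$ from below and the asymptotic law \eqref{Formula3}, \eqref{Formula4} with strictly positive coefficient $C^-(\o_\i)$.

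The only delicate point is that the sphere input controls only the symmetric combination $M_\i(\x)+V^{-1}M_\i(\hat\x)V$, not $M_\i(\x)$ alone; if $\kb_1$ and $\kb_2$ were of opposite signs and comparable magnitude at the chosen point, no direct conclusion would be possible. The $\x\leftrightarrow\hat{\x}$ symmetrization trick bypasses this difficulty by averaging the linear form $\kb_1 f(\x)+\kb_2 f(\hat\x)$ over the pair $\x^*,\widehat{\x^*}$, so that only the mean curvature $\kb_1+\kb_2$ enters. The hypothesis of the theorem is tailored exactly to make this factor strictly positive, and once that is noted the proof proceeds cleanly.
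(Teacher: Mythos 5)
Your argument is correct and is essentially the paper's own reasoning, i.e.\ the mirror image of the proof of Theorem \ref{thm.Convex}: take the trace of the representation \eqref{perm.superfin}, feed in the sphere information $\tr M_{\i}(\x)+\tr M_{\i}(\hat{\x})\le 0$ with strict inequality at some $\x^*$, and invoke Proposition \ref{prop.sign}. Your explicit $\x\leftrightarrow\hat{\x}$ symmetrization, which makes only the mean curvature $\kb_1+\kb_2$ enter, merely spells out (correctly) a step the paper leaves implicit when it asserts that the trace of $\mF_{\i}$ is sign-definite at some covector.
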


An interesting particular case  of this theorem is the following.

\begin{corollary}\label{cor.cavity}Suppose that the body $\Dc$ contains a cavity inside.  Then the coefficients $C^{-}(\o_\i)$
 in \eqref{Formula4} are positive, there are infinitely many eigenvalues of $\KF$ approaching $\o_\i$ from below, and they satisfy the asymptotic formula \eqref{Formula3} with '-' sign.
\end{corollary}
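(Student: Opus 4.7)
The plan is to reduce Corollary \ref{cor.cavity} to Theorem \ref{thm.concave} by producing, on the boundary of any cavity, a point at which both principal curvatures, in the sign convention of the paper, are strictly positive. Since Theorem \ref{thm.concave} needs only one such point to conclude that $C^-(\o_\i)>0$ and that the asymptotic formula \eqref{Formula3} holds with the minus sign, this reduction handles the corollary in one blow.

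To exhibit such a point, I would proceed as follows. Let $\G_c\subset\G$ be a connected component bounding a cavity $\Dc_c$: an open bounded set disjoint from $\Dc$ with $\partial\Dc_c=\G_c$. Fix any interior point $p\in \Dc_c$ and choose $\xb^\circ\in\G_c$ maximizing $\xb\mapsto|\xb-p|$, writing $R=|\xb^\circ-p|$. Maximality forces $\Dc_c\subset\overline{B_R(p)}$, and the outward normal to $\Dc$ at $\xb^\circ$, which by definition points away from $\Dc$ and hence into the cavity, equals $(p-\xb^\circ)/R$. In particular $p$ lies on the positive local $x_3$-axis of the paper's coordinate system at $\xb^\circ$.

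Write $\G_c$ locally as $x_3=F(x_1,x_2)$, so that the cavity occupies the side $\{x_3>F(x_1,x_2)\}$ near the origin. The containment $\Dc_c\subset \overline{B_R(p)}$ forces
\begin{equation*}
F(x_1,x_2)\ge R-\sqrt{R^2-x_1^2-x_2^2}=\frac{x_1^2+x_2^2}{2R}+O\bigl((x_1^2+x_2^2)^2\bigr)
\end{equation*}
in a neighborhood of the origin. Matching against the Taylor expansion $F(x)=\tfrac12(\kb_1(\xb^\circ)x_1^2+\kb_2(\xb^\circ)x_2^2)+O(|x|^3)$ (with axes aligned to the principal directions) shows $\kb_1(\xb^\circ),\kb_2(\xb^\circ)\ge 1/R>0$. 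Theorem \ref{thm.concave} then applies at $\xb^\circ$ and concludes the proof.

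The main thing to be careful about is the sign convention: the paper's $x_3$ points along the outward normal of $\Dc$, which on a cavity boundary is the inward normal of the cavity, so a cavity is concave with respect to $\Dc$ in exactly the sense required by Theorem \ref{thm.concave}. The model case of a spherical cavity of radius $R$, where $F(x_1,x_2)=R-\sqrt{R^2-x_1^2-x_2^2}$ gives $\kb_1=\kb_2=1/R>0$, serves as a sanity check on the sign. No further symbol computations are needed, since the heavy lifting has already been absorbed into Theorem \ref{thm.concave}.
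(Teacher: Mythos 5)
Your proof is correct and follows the paper's route: reduce to Theorem \ref{thm.concave} by exhibiting a point on the cavity-bounding component of $\G$ where, in the paper's sign convention ($x_3$ along the outward normal of $\Dc$, which points into the cavity), both principal curvatures are strictly positive. The only difference is cosmetic: the paper locates such a point by appealing to the convexity statement of Proposition \ref{Prop.convex} applied to the cavity boundary (a diameter endpoint), while you use the farthest point of that component from a fixed interior point of the cavity -- the same touching-sphere argument in a slightly different guise.
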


It is clear that at the point where the boundary of the cavity is convex, the surface $\G$, considered as the boundary of $\Dc$, is concave and Theorem \ref{thm.concave} applies.

\subsection{Two-sided asymptotics}
Using our results on the structure of the symbol $\mF_\i(x,\x)$ we can find a visual expression for the two-sided asymptotics of eigenvalues of the NP operator $\KF,$ following Theorem \ref{SingNumbers}.

Let $\G$ be a smooth compact surface and $\kb_1(\xb), \kb_2(\xb)$ be the principal curvatures at the point $\xb\in\G$. We suppose that they are calculated in C-co-ordinate systems, discussed above, where $d\Sc(x)$ equals the area element for the measure on $\G$ induced by the Lebesgue measure in $\R^3$.

We calculate the integrand  in \eqref{coeff.pm}; it gives

\begin{gather*}\label{snumbers}
  \tr((\mF_\i(x,\x))^2)=\tr([\kb_1(x)M_{\i}{\x})+\kb_2(x)V^{-1}M_{\i}(\hat{\x})V]^2)= \\\nonumber
  \kb_1(x)^2\tr M_\i^2(\x)+2\kb_1(x)\kb_2(x)\tr[M_\i(\x)V^{-1}M_{\i}(\hat{\x})V]+\kb_2^2\tr M_{\i}^2(\hat{\x}),
\end{gather*}
(here, we used the fact  that $\tr((V^{-1}M V)^2)=\tr(M^2)$).
We substitute \eqref{snumbers} into  \eqref{coeff.pm} and use the fact that $\int_{S^1} M_{\i}^2(\hat{\x})\pmb{\pmb{\o}}=\int_{S^1} M_{\i}^2({\x})\pmb{\pmb{\o}}$ and obtain

\begin{gather}\label{two-sided}
   C(\o_\i)=2^{-1} \int_\G(\kb_1^2+\kb_2^2)d\Sc(x)\int_{S^1} \tr (M_\i^2(\x))\pmb{\pmb{\o}}\\\nonumber
  +\int_\G \kb_1(x)\kb_2(x)d\Sc(x)\int \tr[M_\i(\x)V^{-1}M_{\i}(\hat{\x})V]\pmb{\pmb{\o}}\equiv \\\nonumber
  \As_{\i} W(\G)+\Bs_{\i} \chi(\G),
\end{gather}

where $\chi(\G)$ is the Euler characteristic of the surface $\G,$
\begin{equation*}
 \chi(\G)=(2\pi)^{-1}\int_\G \kb_1(x)\kb_2(x)d\Sc(x),
\end{equation*}
by the Gauss-Bonnet formula,
and $W(\G)$ is the Willmore energy of $\G,$
\begin{equation*}
  W(\G)=\int_{\G}\frac{(\kb_1(x)+\kb_2(x))^2}{4}d \Sc(x).
\end{equation*}
The coefficients $\As_{\i}, \Bs_{\i}$ depend only on the Lam\'e constants $\l,\m$ and they are  equal to
\begin{equation*}
  \As_{\i}=2\int_{S^1}\tr (M_\i^2(\x)) \pmb{\pmb{\o}},
\end{equation*}
and
\begin{equation*}
  \Bs_{\i}=2\pi\int_{S^1}\left[\tr[M_\i(\x)V^{-1}M_{\i}(\hat{\x})V]-\tr[M_\i^2(\x)] \right]\pmb{\pmb{\o}}
 \end{equation*}

 We formulate this, rather esthetic, result as a theorem.

 \begin{theorem}\label{thm.twosided}
   Let $\G$ be a compact smooth surface in $\R^3$. Then for each point $\km_\i,\,\i=-1,0,1,$ of the NP operator,
   there exist infinite sequences of eigenvalues converging to $\km_{\i}$ satisfying the two-sided asymptotic law \eqref{Form.l pm} with nonzero coefficients given by \eqref{two-sided}. In particular, the degenerate case in the spectral analysis of polynomially compact pseudodifferential operators never happens for the elastic NP operator.
 \end{theorem}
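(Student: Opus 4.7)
The proof assembles ingredients already established in the paper. First, Theorem \ref{SingNumbers} reduces the study of $n(\KF;\o_\i,\t)=n_+(\KF;\o_\i,\t)+n_-(\KF;\o_\i,\t)$ to the trace integral \eqref{coeff.pm}. Plugging in the structure result \eqref{perm.superfin}, expanding $\tr(\mF_\i^2)$ as a quadratic form in $(\kb_1,\kb_2)$ with universal, $\x$-dependent coefficients, and using the similarity invariance $\tr((V^{-1}MV)^2)=\tr(M^2)$ together with the elementary change-of-variables identity $\int_{S^1}\tr M_\i^2(\hat\x)\pmb{\pmb{\o}}=\int_{S^1}\tr M_\i^2(\x)\pmb{\pmb{\o}}$, one recovers \eqref{two-sided} and the stated expressions for $\As_\i$ and $\Bs_\i$. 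This part is essentially the computation given immediately above the theorem statement, so little more than bookkeeping is needed here.

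The substantive step is the strict positivity of $C(\o_\i)$. The one-sided coefficients $C^\pm(\o_\i)$ are integrals of $\Tr^{(2)}_\pm(\mF_\i)$, each a sum of squares, hence nonnegative. By Theorem \ref{thm.Convex}, already established for any body $\Dc$, one has $C^+(\o_\i)>0$. Consequently
\[
C(\o_\i)=C^+(\o_\i)+C^-(\o_\i)\ge C^+(\o_\i)>0,
\]
and the two-sided asymptotic formula \eqref{Form.l pm} holds with a strictly positive coefficient. This in particular forces $n(\KF;\o_\i,\t)\to\infty$ as $\t\to 0$, so there exists an infinite sequence of eigenvalues of $\KF$ converging to $\o_\i$ whose counting function satisfies \eqref{Form.l pm}.

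Finally, the degenerate case in the sense of \cite{RozNP1} corresponds to the principal symbol $\mF_{\i,-1}(x,\x)$ vanishing identically on $\mathrm{S}^*\G$. Were this so, formula \eqref{Formula4} would yield $C^\pm(\o_\i)=0$, contradicting the positivity just proved. Hence the degenerate case never occurs, and the asymptotics \eqref{Form.l pm} is genuinely nontrivial.

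The only conceptual obstacle is to notice that the nonvanishing of $C(\o_\i)$ cannot be read off directly from the decomposition $C(\o_\i)=\As_\i W(\G)+\Bs_\i\chi(\G)$: the Euler characteristic may be negative (e.g., for high-genus surfaces), the signs and relative magnitudes of $\As_\i,\Bs_\i$ depend on the Lam\'e constants in a way that is not transparent without the explicit evaluations of Sect.~6, and cancellation between the two summands cannot be ruled out by inspection. It is therefore essential to bypass this issue by routing the argument through the one-sided positivity $C^+(\o_\i)>0$ supplied by Theorem \ref{thm.Convex}, rather than attempting a direct estimate on \eqref{two-sided}.
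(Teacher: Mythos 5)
Your proposal is correct and follows essentially the same route as the paper: the formula \eqref{two-sided} is exactly the computation the paper performs just before stating the theorem, and the nonvanishing of $C(\o_\i)$ is obtained, as the paper intends, from the already established one-sided positivity $C^+(\o_\i)>0$ of Theorem \ref{thm.Convex} together with $C^-(\o_\i)\ge 0$, which also rules out the degenerate case $\mF_{\i,-1}\equiv 0$. Your closing remark — that positivity cannot be read off from $\As_\i W(\G)+\Bs_\i\chi(\G)$ directly and must be routed through $C^+(\o_\i)$ — makes explicit a point the paper leaves implicit, but it is the same underlying argument.
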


\begin{remark}One can note a similarity of our Theorem \ref{thm.twosided} and, especially, formula \eqref{two-sided}, with the results of the papers \cite{M}, \cite{MR3D} concerning the eigenvalues, tending to zero, of the compact  NP operator in 3D electrostatics. There, the asymptotics of the, separately considered, positive and negative eigenvalues contains, respectively, integral of positive and negative parts of a certain rather complicated expression involving the curvatures of the surface, while the coefficient in the two-sided asymptotics of these  eigenvalues is expressed in a linear way via the global geometric characteristics of the surface, namely, its Euler charateristic and Willmore energy.
\end{remark}

\subsection{Some geometry}\label{geom.subsect} In the considerations above we used the following geometrical fact: for a smooth closed surface $\G\subset\R^3$, there exists a point $\xb^{\circ}\in\G$ such that the surface is strictly convex at $\xb^{\circ}$, this means, both principal curvatures of $\G$ at $\xb^{\circ}$ are negative. We were not able to locate a proof of this, probably,folklore, result in the literature, therefore we give an elementary proof here (not pretending that it is a novel one).

Let $\db=\diam(\G)$ be the diameter of $\G,$ the largest distance between a pair of points in $\G$. By compactness, such pair must exist (probably, not a unique one, but this does not matter.) Let $A,B$ be such points.
Let $\kb_1(B),\kb_2(B)$ be the principal curvatures of $\G$ at $B$ in a C-co-ordinate system.
\begin{proposition}\label{Prop.convex} The principal curvatures at the point $B$ satisfy $\kb_1(B),\kb_2(B)
\le -\db^{-1}$.
\end{proposition}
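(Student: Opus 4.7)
The plan is to compare the surface $\G$ near $B$ with the sphere $\partial B(A,\db)$ and read off the principal curvatures from a local Taylor expansion. Since $\db=\diam(\G)$, every point of $\G$ lies within distance $\db$ of $A$, so $\G\subset\overline{B(A,\db)}$; because this closed ball is convex, the convex hull of $\G$ — and with it the bounded body $\Dc$ — is also contained in it.

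Next I would pin down the direction of the outward normal $\nup(B)$. For every $\ve>0$ the point $B+\ve(B-A)/\db$ lies at distance $\db+\ve>\db$ from $A$, hence outside $\overline{B(A,\db)}\supset\Dc$; so moving from $B$ in the direction $(B-A)/\db$ immediately leaves $\Dc$. On the other hand, $B$ is a critical point of the restriction of $|\cdot-A|^2$ to $\G$, so $B-A$ is normal to $\G$ at $B$. The two facts together force $\nup(B)=(B-A)/\db$. Setting up C-coordinates at $B$ aligned with the principal curvature directions now places $A$ at $(0,0,-\db)$, and writes $\G$ locally as $x_3=F(x_1,x_2)$ with $F(0)=0$, $\nabla F(0)=0$, and
\[
F(x_1,x_2)=\tfrac12\bigl(\kb_1(B)x_1^2+\kb_2(B)x_2^2\bigr)+O(|x|^3).
\]

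Finally, the inclusion $\G\subset\overline{B(A,\db)}$ near $B$ reads $x_1^2+x_2^2+(F+\db)^2\le\db^2$, which, using $F=O(|x|^2)$, simplifies to
\[
F(x_1,x_2)\le -\frac{x_1^2+x_2^2}{2\db}+O(|x|^4).
\]
Substituting the Taylor expansion of $F$ and specializing first to $x_2=0$ and then to $x_1=0$ isolates each principal curvature and yields $\kb_1(B),\kb_2(B)\le -\db^{-1}$. The one step requiring care is fixing the orientation of $\nup(B)$: the identical computation carried out with the opposite normal convention would produce $\kb_i(B)\ge\db^{-1}$, so the essential geometric input — and the only real obstacle — is verifying that the body $\Dc$ (and in particular the point $A$) sits on the interior-normal side of the tangent plane to $\G$ at $B$.
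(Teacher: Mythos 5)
Your proof is correct and takes essentially the same route as the paper: a second-order Taylor expansion of $\G$ at $B$ in C--co-ordinates compared against the sphere of radius $\db$ centered at $A$, with the diameter property supplying the inequality — the paper runs this as a contradiction (if $\kb_1(B)>-\db^{-1}$, a nearby point of $\G$ would be farther than $\db$ from $A$), while you read the same bound directly from the inclusion $\G\subset\overline{B(A,\db)}$. A small bonus of your write-up is the explicit verification that $\nup(B)=(B-A)/\db$, i.e.\ that $A$ sits on the inward-normal side, a point the paper's computation takes for granted.
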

\begin{proof}In a C-co-ordinate system centered at $B,$ the surface is described near $B$ by
\begin{equation*}%\label{k1k2}
  x_3=\frac{\kb_1(B)}2 x_1^2 + \frac{\kb_2(B)}2 {x_2}^2+O(|x|^3).\end{equation*}
Therefore, the distance from $A$ to the point $\xb$ of $\G$ near $B$, with co-ordinates $x_1,x_2,x_3$
satisfies
\begin{equation*}
  \dist^2(A,\xb)=(x_1^2 +x_2^2+(\db+\frac{\kb_1(B)}2 x_1^2 + \frac{\kb_2(B)}2{x_2}^2))^2+O(|x|^3).
\end{equation*}
If we suppose, for example, that $\kb_1(B)>-\db^{-1}$, we have for $x_2=0,$
\begin{equation*}
  \dist^2(A,\xb)=(x_1^2+(\db+\frac{\kb_1(B)}2 x_1^2))^2+O(x_1^3)= \db^2+x_1^2 -\db\kb_1(B)  x_1^2 +O(x_1^3),
\end{equation*}
therefore, $\dist(A,\xb)>\db$ for small $x_1$, and this contradicts the fact that $\db$ is the diameter of $\G.$
\end{proof}
\section{Explicit calculation of the symbol $\mF_{\i}$. Dependence on the Lam\'e constants}\label{explicit}

As marked above, the expression for the coefficients in \eqref{Formula4} is highly complicated since it involves integration of the eigenvalues of a definite sign of the symbolic matrices $\mF_{\i}(x,\x)$. In this section we demonstrate, nevertheless, how the symbol $\mF_{\i}$ can be calculated. In order to find the symbol $\mF_{\i,-1}(x,\x)$  we need to find just one  matrix-function $M_{\i}(\x)$. This matrix depends on the Lam\'e constants of the material and is a function of the covector $\x\in \R^2.$

\subsection{The standard surface}\label{Sect.Stand}

Since the universal symbol $M_{\i}(\x)$ is the same for all surfaces, it is sufficient to find it for just one, specially chosen,  surface, where the calculation of $\m_i$ is less troublesome. As such a surface we take the one with only one of principal curvatures nonvanishing. As such standard surface, we select the cylinder $\G$ with radius $R=-\kb^{-1}$, $\kb<0$ and we perform calculation in more detail than it was done in Sect.4. This enables us to determine the dependence of the effective symbol on the Lam\'e constants. (It does not matter that the cylinder is non-compact since the symbol $M_{\i}(\x)$ is a local quantity.)

 We consider such cylinder $\G$ as being described by the equation $x_1^2+(x_3+R)^2=R^2$; in a neighborhood of`the point $\xbc$ placed in  the origin,  $\xbc:\,x_1=x_2=x_3=0$, we can write
\begin{equation}\label{cylinder}
    x_3=F(x)= -R+\sqrt{R^2- x_1^2}=\frac12\kb x_1^2+O(x_1^4),\,x_1\to 0,\, x_2\in\R^1,
\end{equation}
with the normal vector directed  outside  the body, i.e., upward along the $x_3$ axis at the point $\xb^{\circ}=(0,0,0).$ The C-co-ordinates lines are, due to our construction in Section \ref{subsect coord}, directed along the curvature lines, i.e., along the orthogonal cross-section of the cylinder and along its axis. The frame vectors are directed along $x_1,x_2,x_3$ axes in $\R^3\subset \C^3$.
In the chosen co-ordinates, $\kb_1(\xbc)=\kb, \kb_2(\xbc)=0,$ and  derivatives of all entries in the direction $x_2$ vanish.

The first and second fundamental forms of the surface $G$ at the point $\xbc$ are  equal to
\begin{equation*}%\label{forms}
    \Ib(\xbc,dx)=dx_1^2+dx_2^2; \, \Ib\Ib(\xbc,dx)=\kb dx_1^2.
\end{equation*}

The normal vector $\nup(\yb)=\nup(y_1,y_2,F(y_1))$ at the point $\yb=(y,F(y)) =(y_1,y_2,F(y_1,y_2))$ lying close to $\xbc$ equals
\begin{equation}\label{Normal}
\nup(\yb)=(-\kb y_1,0,\sqrt{1-\kb^2y_1^2})^{\top}=(-\kb y_1,0,1)^{\top}+O(y_1^2), y_1\to 0.
\end{equation}
Therefore, its derivatives are
\begin{equation*}%\label{Norm}
     \frac{\partial \nup(\yb)}{\partial y_1}=(-\kb,0,0)^{\top}+O(|y_1|); \, \frac{\partial \nup(\yb)}{\partial y_2}=0.
\end{equation*}

\subsection{The expansion of the kernel}

%\section{The calculations with the kernel}\label{kernel}

Recall that the kernel $\Kc(\xb,\yb)$ of the NP operator $\KF$ is given in \eqref{kernelNP}. We will express it in co-ordinates $x,y$ and find
the first two power terms in the expansion of $\Kc(y,y-x)$, as $y\to x$.

In  the C--co-ordinate system centered at $\xb=\xbc=0$,
we  calculate contributions to the singularities of the symbol of $\KF$ coming from the terms in \eqref{kernelNP} separately. First, we find the expansion of the distance $|\xb-\yb|=|\yb|$ and its powers. We have:
 $|\yb|^2=y_1^2+y_2^2 +F(y_1)^2$, therefore
\begin{equation*}%\label{dist-2}
   |\yb|^{-2} =|y|^{-2}\left(1+\frac{F(y_1)^2}{|y|^2}\right)^{-1}=|y|^{-2}+O(1)).
\end{equation*}
Similarly,
\begin{equation*}\label{dist-3}
    |\yb|^{-3}=|y|^{-3}(1+O(|y|^2))=|y|^{-3}+O(|y|^{-1}), \, y\to 0.
\end{equation*}
Thus, when calculating the leading two terms in the expansion in \eqref{KrnelExpnsion}, we may replace $|\yb|$ by $|y|$.

Next we evaluate the expression $S_1(\yb)=-\sum_{l=1}^3{\n_l(\yb)y_l}$ entering in the second line \eqref{kernelNP}, for $\yb=(y,F(y_1))\in \G.$ Since $\nup(\yb)=(-\kb y_1,0,1)^{\top}+O(y_1^2)$, we have
\begin{equation*}%\label{term S}
    S_1=-(-\kb y_1,0,1)(y_1,y_2,\frac12 \kb y_1^2)^{\top} +O(|y|^3) =\frac12\kb y_1^2+O(|y|^3),\, y\to 0.
\end{equation*}
Further on, we calculate the expression $S_{2,p,q}(y)=(x_p-y_p)(x_q-y_q)$ in \eqref{kernelNP}, with  $x_p=x_q=0$.
On the matrix diagonal, i.e., for $p=q$,
\begin{equation*}%\label{p=q}
S_{2,1,1}=y_1^2;\,S_{2,2,2} =y_2^2;\,S_{2,3,3} =F(y_1)^2=O(y_1^4),
\end{equation*}
while off-diagonal, for $p\ne q$,
\begin{gather*}%\label{p.ne.q}
S_{2,1,2}=S_{2,2,1}=y_1 y_2;\, S_{2,1,3}=S_{2,3,1}=F(y_1)y_1=O(|y|^3);\\\nonumber S_{2,3,2}=S_{2,2,3}=F(y_1)^2=O(|y|^4).
\end{gather*}
Therefore, the expression on the second line in \eqref{kernelNP} equals, for $\xb=0$,
\begin{equation*}%\label{second_line}
[\Kc^{(2)}(y)]_{p,q}=\frac1{2\pi}\left[\km\d_{pq} +\frac{3\mm}{2} \frac{y_py_q}{|y|^2}\right]\frac{\kb y_1^2}{|y|^3}+O(1), p,q=1,2,
\end{equation*}

\begin{equation*}%\label{second_line2}
    [\Kc^{(2)}(y)]_{p,q}=O(1),\,\mathrm{for}\, p=3,q=2\, \mathrm{or} \,p=2,q=3,
\end{equation*}
and, finally,
\begin{equation*}%\label{second_line3}
    [\Kc^{(2)}(y)]_{3,3}=-\frac1{2\pi}\km \kb\frac{y_1^2}{|y|^3}+O(1)
\end{equation*}
(recall that $\km=\frac{\m}{2(\l+2\m)}$).

In particular, we can see that the expression on the second line in \eqref{kernelNP} has singularity of order $-1$ in $|x-y|$, and, therefore, contributes only to the subsymbol of the pseudodifferential operator $\KF$, but not to its principal symbol.

Next we calculate  the expansion  on the first line in \eqref{kernelNP} (recall, we set $\xb=\xbc=0$ here.) These terms may contribute to both principal and subsymbol of $\KF$. Note first  that the matrix defined by this line is antisymmetric, therefore, the diagonal terms, $p=q$, vanish.

For the off-diagonal terms, using \eqref{Normal}, we obtain
\begin{equation*}%\label{85.2}
   \n_1(\yb)y_2-\n_2(\yb)y_1=-\kb y_1y_2;
\end{equation*}

\begin{equation*}%\label{85.3}
\n_1(\yb)y_3-\n_3(\yb)y_1=-y_1+O(|y|^3);
\end{equation*}

\begin{equation*}%\label{85.4}
\n_2(\yb)y_3-\n_3(\yb)y_2=-y_2 +O(|y|^3).
\end{equation*}
Therefore, the expression on the first line in \eqref{kernelNP} equals
\begin{gather}\label{symbol complete1}
    \Kc^{(1)}(y)=\frac1{2\pi} \km |y|^{-3}\left(
                  \begin{array}{ccc}
                    0 & 0 & -y_1 \\
                    0 & 0 & -y_2 \\
                    y_1 & y_2 & 0\\
                  \end{array}
                \right)\\\nonumber +\frac1{2\pi}\km\kb|y|^{-3}\left(
                                                           \begin{array}{ccc}
                                                             0 & -y_1y_2 & 0 \\
                                                             y_1y_2 & 0 & 0 \\
                                                             0 & 0 & 0 \\
                                                           \end{array}
                                                         \right),
 \end{gather}
   and the one on the second line is
   \begin{gather}\label{symbol complete2}
    \Kc^{(2)}(y)  =  \frac{\kb}{2\pi}\left(
                       \begin{array}{ccc}
                         \frac{3\mm}{2}\frac{ y_1^4}{|y|^5}+\km\frac{y_1^2}{|y|^3} & \frac{3\mm}{2}\frac{y_1^3 y_2}{|y|^5} & 0 \\
                        \frac{3\mm}{2}\frac{y_1^3 y_2}{|y|^5}& \frac{3\mm}{2}\frac{ y_1^2y_2^2}{|y|^5}+\km\frac{y_1^2}{|y|^3} & 0 \\
                         0 & 0 & \km\frac{y_1^2}{|y|^3} \\
                       \end{array}
                     \right) \\\nonumber
                     =\frac1{2\pi} \kb\km\frac{y_1^2}{|y|^3}\Eb+\frac3{4\pi}\kb\mm y_1^2|y|^{-5}\left(
                                                                                 \begin{array}{ccc}
                                                                                   y_1^2 & y_1y_2 & 0 \\
                                                                                   y_1y_2 & y_2^2 & 0 \\
                                                                                   0 & 0 & 0 \\
                                                                                 \end{array}
                                                                               \right).
\end{gather}

The first term in  \eqref{symbol complete1}  corresponds to the principal part of the symbol of the NP operator only, while \eqref{symbol complete2} and the second term in \eqref{symbol complete1} contribute to the subsymbol.

\subsection{The expansion of the symbol}
Next we transform our formulas for the kernels \eqref{symbol complete1}, \eqref{symbol complete2} of the integral operators to the corresponding expressions for the components  of the symbol of the pseudodifferential operator $\KF$. Recall that this symbol is the Fourier transform of the kernel $\Kc$ of the integral operator in $x-y\in \R^2$ variable. Some of formulas we use can be found in standard tables of the Fourier transform of distributions, other ones need to be calculated by hand.

We start with recalling that the kernel $(2\pi|x-y|)^{-1}$ in $\R^2$ corresponds, by means of the Fourier transform $\Fs$ in $\R^2$, to the symbol $|\x|^{-1}$,
\begin{equation*}%\label{3.1}
    \Fs [(2\pi |y|)^{-1}]=|\x|^{-1}.
\end{equation*}

Further on, since $\frac{y_p}{|y|^{3}}=-\partial_p(|y|^{-1})$, $p=1,2,$
 \begin{equation*}%\label{deriv1}
\Fs\left[(2\pi)^{-1}\frac{y_p}{|y|^{3}} \right]=i\x_p|\x|^{-1},
 \end{equation*}
in the sense of distributions.

 Next, we consider the kernel $|y|^{-5}$. Its Fourier transform in the sense of distributions equals $\frac{2\pi}{9}|\x|^3.$ By the properties of the Fourier transform, we have
 \begin{equation*}%\label{deriv2}
    \Fs\left[\frac{y_1^3y_2}{2\pi|y|^5}\right]=\frac{1}{9} \partial_{\x_2}\partial_{\x_1}^3(|\x|^3)= \x_2^3\x_1|\x|^{-5}.
 \end{equation*}

In the same way,
\begin{equation*}%\label{deriv3}
    \Fs\left[\frac{y_1^4}{2 \pi|y|^5}\right]=\frac{1}{9}\partial_{\x_1}^4(|\x|^3)= \x_2^4|\x|^{-5}.
\end{equation*}

Finally,
\begin{equation}\label{deriv4}
   \Fs\left[ \frac{y_1^2}{2\pi|y|^3}\right]= \x_2^2|\x|^{-3},
\end{equation}
\begin{equation}\label{deriv5}
   \Fs\left[ \frac{y_1y_2}{2\pi|y|^3}\right]= \x_1\x_2|\x|^{-3},
\end{equation}
and
\begin{equation}\label{deriv6}
   \Fs\left[ \frac{y_2^2}{2\pi|y|^3}\right]= \x_1^2|\x|^{-3}.
\end{equation}

As a result, the symbol of the NP operator $\KF$ on the surface $\G$ equals, in C-co-ordinates centered at the point $\xbc=(0,0,0)$,
\begin{gather}\label{symbol KF}
    \kF(\xbc,\x)={i\mathbbm{k}  }\left(
                   \begin{array}{ccc}
                     0 & 0 & -\x_1|\x|^{-1} \\
                     0 & 0 & -\x_2|\x|^{-1} \\
                     \x_1|\x|^{-1} & \x_2|\x|^{-1} & 0\\
                   \end{array}
                 \right)+ \\\nonumber \frac12 \kb\km{\x_2^2}{|\x|^{-3}}\Eb
                 + \frac32\kb\mm|\x|^{-5}\x_2^2\left(
                           \begin{array}{ccc}
                             \x_2^2& \x_1\x_2 & 0 \\
                              \x_1\x_2 & \x_1^2 & 0 \\
                             0 & 0 &  \\
                           \end{array}
                         \right)+ \\\nonumber \kb\mm|\x|^{-3}\left(
                                                   \begin{array}{ccc}
                                                     0 & \x_1\x_2 & 0 \\
                                                     -\x_1\x_2 & 0 & 0 \\
                                                     0 & 0 & 0 \\
                                                   \end{array}
                                                 \right)
                        + O(1).
\end{gather}

So, we have calculated the leading two terms in the expansion of the symbol of $\KF$ for the case of a cylinder.  In the expression  \eqref{symbol KF}, the  first term is order $-0$ homogeneous and represents the leading symbol $\kF_0,$ as we already know, while the remaining  terms, the ones on the second and third lines,  represent the subprincipal symbol $\kF_{-1}$. We note that both $\kF_0$ and $\kF_{-1}$ are Hermitian. Moreover, $\kF_0$ does not depend on the geometry of the surface $\G$, while $\kF_{-1}$ depends linearly on the curvature $\kb$, as we have already found from less detailed considerations in Sect.4. Therefore, for a general, non-cylindrical  surface, by the curvature linearity established in Sect.4, a similar term containing the second principal curvature should be added to the subsymbol. Additionally, we pinpoint that the subsymbol terms in \eqref{symbol KF} depend linearly on the material characteristics $\km=\frac{\m}{2(2\m+\l)}$ and $\mm=\frac{\l+\m}{2(2\m+\l)}=\frac12-\km.$
These properties of the symbol of $\KF$ will be discussed further
on.

\subsection{The gradient of the principal symbol}\label{Section.Gradient}
Next, we need to find an explicit expression of one more object, namely the gradient of the principal symbol $\kF_0(x,\x)$ that enters in the expression for $\mF_{\i}$. We note, for a further reference, that the principal symbol $\kF_0$ depends linearly  on $\km$ but does not depend on $\mm$. The same property is valid for all derivatives of $\kF_0.$

Formula \eqref{symbol KF} gives us the representation of this symbol at the point $\xbc=0$ in the co-ordinates and the frame associated with this point. What we need now is to calculate this symbol at a different point $\xt=(x,F(x))\ne \xbc $ and then find derivatives $\partial_{x_\a}\kF_0(x,\x)$ and $\partial_{\x_\a}\kF_0(x,\x)$ for $x=0$ by means of making $\xt$ approaching $\xbc$. Again, a condense description of this calculatoin is presented in Sect.4.   This calculation is needed for $\a=1$ only, since, on the cylinder, all derivatives in $x_2$ variable vanish. We find the derivative in $\x_1$ first. By a direct calculation, we obtain from \eqref{symbol KF}:

\begin{equation*}%\label{Deriv.ksi}
    \partial_{\x_1}\kF_0(x,\x)= \frac{ i\mathbbm{k}}{|\x|^3}\left(
                                  \begin{array}{ccc}
                                    0 & 0 & \x_2^2 \\
                                    0 & 0 & \x_1\x_2 \\
                                    -\x_2^2& -\x_1\x_2 &0  \\
                                  \end{array}
                                \right).
\end{equation*}
Next we find the derivative $\partial_{x_1}\kF_0(x,\x)$. For a given $x_1$, we take a point $\xt=(x_1,0,F(x_1))\in\G$ and consider the C--co-ordinates system $(y_1,y_2,y_3)$ centered at $\xt$ and the corresponding frame in $\R^3$. These co-ordinates are rotated compared with the system centered at $\xbc$, the rotation realized by the matrix
\begin{equation}\label{U}
    U(\xt)=\left(
       \begin{array}{ccc}
         \cos(\theta) & 0 & \sin(\theta) \\
         0 & 1 & 0 \\
         -\sin(\theta) & 0 & \cos(\theta) \\
       \end{array}
     \right),
\end{equation}
where the angle $\theta$ equals
\begin{equation}\label{angle}
\theta=\arcsin{F'(x_1)}=\arcsin(\frac{x_1}{\kb}).
  \end{equation}
  This is a rotation around the $y_2$ axis, directed, recall, along the directrix of the cylinder $\G$, i.e., parallel to the  $x_2$ axis.
The frame in $\R^3$ at the point $\xt$ has directions along the axes $y_1,y_2,y_3;$ the covariables $\y_j$ are directed along the corresponding  $y_j$ axes.

Further on, since we need to trace the dependence of the symbols of our operator on the co-ordinate system, we will mark it by the superscript: thus, $\kF_0^\circ$ denotes the expression of the symbol in the C--co-ordinate system and frame centered at $\xbc$ etc.

Now, the principal symbol of the operator $\KF$ at the point $\xt$, calculated in $(\yb,\y)$-  C--co-ordinate system, is given by the same expression as in \eqref{symbol KF}, just with $\x$ replaced by $\y:$
\begin{equation}\label{symbZ}
    \kF_0^{\bullet}(\xt,\y)={ i\mathbbm{k}}{|\y|^{-1}}\left(
                           \begin{array}{ccc}
                             0 & 0 & -\y_1 \\
                             0 & 0 & -\y_2 \\
                             \y_1 & \y_2 & 0 \\
                           \end{array}
                         \right).
\end{equation}
Now we apply the rule of the variables change in the principal symbol of pseudodifferential operators;
 \begin{equation*}\label{change}
    \kF_0^{\xb}(\xt,\x)=U(x_1)^{-1}\kF^{\bullet}_0({\xt,U(\yb)^{*}\x})U(x_1),
\end{equation*}
where $U=U(x_1)$ is the linear transformation \eqref{U}. Note, that in addition to the standard formula for the  change of variables in pseudodifferential operators, which  is reflected by the presence of $U $ in the argument in $\kF$, formula \eqref{symbZ} takes into account the circumstance that the symbols at the points $\xt$ and $\xbc$ are represented in different frames, related, again, by means of the same matrix $U(x_1)$. This gives us
\begin{gather}\label{Trans2}
   \kF_0^{\xt}(\xbc,\x)=\frac{ i\mathbbm{k}}{|\x|}U^{-1}\left(
                          \begin{array}{ccc}
                            \x_1\sin\theta & 0 & -\x_1\cos\theta \\
                            0 & 0 & -\x_2 \\
                            \x_1\cos\theta & \x_2 & \x_1\sin\theta \\
                          \end{array}
                        \right)U=\\\nonumber\frac{ i\mathbbm{k}}{|\x|}\left(
                          \begin{array}{ccc}
                            \x_1\sin\theta & 0 & -\x_1\cos\theta \\
                            0 & 0 & -\x_2 \\
                            \x_1\cos\theta & \x_2 & \x_1\sin\theta \\
                          \end{array}
                        \right).
\end{gather}
The last equality uses the fact that, in our case, the matrix $U$ commutes with the matrix $\kF_0$.
Thus, we have  obtained the expression for the principal symbol of $\KF$ written in one and the same co-ordinate system and the same frame. Now we  can differentiate the expression \eqref{Trans2} in $x_1$ variable, taking into account \eqref{angle}:
\begin{equation}\label{deriv.x}
    \frac{\partial}{\partial x_1}\kF^{\xbc}_0(x_1,0,F(x_1),\x)\mid_{x_1=0}=\frac{ i\km\kb}{|\x|}\left(
                          \begin{array}{ccc}
                            \x_1 & 0 & 0\\
                            0 & 0 & 0 \\
                            0 & 0& \x_1\ \\
                          \end{array}
                        \right).
\end{equation}
Finally, we collect all terms we calculated in the symbols; it is more graphical to represent them  in homogeneous variables $\f_\b=\x_\b/|\x|, \, \b=1,2$, so that $\f_1^2+\f_2^2=1$. In this way we have
\begin{gather*}
    \kF_0(\x)=i\km\left(
                             \begin{array}{ccc}
                               0 & 0 & -\f_1 \\
                               0 & 0 & -\f_2 \\
                               \f_1 & \f_2 & 0 \\
                             \end{array}
                           \right);\,
                           \kF_{-1}(\x)=|\x|^{-1}\kb(\km \uF(\x)+\mm\vF(\x)),\\ \nonumber
\uF(\x)=-\f_2^2\Eb;\, \vF (\x)= -\frac12\f_2^2\left(
                            \begin{array}{ccc}
                           \f_2^2 & \f_1\f_2 & 0 \\
                            \f_1 \f_2& \f_1^2 & 0\\
                              0 & 0 & 0 \\
                              \end{array}
                              \right) -\frac32\left(
                                                        \begin{array}{ccc}
                                                          0 & \f_1\f_2 & 0 \\
                                                          -\f_1 \f_2& 0 & 0 \\
                                                          0 & 0 & 0 \\
                                                        \end{array}
                                                      \right)
                                       ;\\\nonumber
               \partial_{\x_1}\kF_0(x,\x) =i\km |\x|^{-1}\left(
                                               \begin{array}{ccc}
                                                 0 & 0 & -\f_2^2 \\
                                                 0 & 0 & -\f_1\f_2 \\
                                                 \f_2^2 & \f_1\f_2 & 0 \\
                                               \end{array}
                                             \right);\\\nonumber
             \partial_{x_1}\kF_0(x,\x)=i\km\kb|\x|^{-1}\left(
                                         \begin{array}{ccc}
                                           \f_1 & 0 & 0 \\
                                           0 & 0 & 0 \\
                                           0 & 0 & \f_1 \\
                                         \end{array}
                                       \right).                                            \end{gather*}

\subsection{The symbol $\mF_\i$}

By the reasons discussed earlier, the explicit expression of the symbol of the order $-1$ pseudodifferential operator $\MF_\i$ is rather wild. Even in the case of a cylinder,  the task of calculating the eigenvalues of the symbol remains quite irrational.  This calculation would involves 15 products of matrices, leading to a incomprehensible expression. It would become even more obscure and senseless after adding 10 more terms present if \emph{both } principal curvatures are nonzero.To illustrate the above, we write down all terms in the terms for the cylinder case in the symbol $\mF_{-1}$.

To shorten the notations, we write $\kF$ instead of $\kF_0,$  $\jF$ instead of $\kF_{-1}$, $\gF$ instead of $\partial_{\x_1}\kF$, and $\hF$ instead of $\partial_{x_1}\kF$, and also omit the subscript $\a$ in \eqref{type2}.

So, for calculating $\mF_{-1}$ (recall, this is the effective symbol for the point $\o_{-1}=-\kn$), we compose terms of type 1, see \eqref{type1} and find their sum:
\begin{gather}\label{type1.expl}
\Fb:=  F_1+F_2+F_3+F_4+F_5=\jF\kF^2(\kF-\km)^2 +(\kF+\km)\jF\kF (\kF-\km)^2\\\nonumber
  + (\kF+\km)\kF \jF(\kF-\km)^2+(\kF+\km)\kF^2 \jF(\kF-\km)+(\kF+\km)\kF^2 \kF-\km)\jF.
\end{gather}
Then we determine the terms of type 2, see \eqref{type2}, and write down  their sum:
\begin{gather}\label{type2.exp}
  \Gb:=\\\nonumber -\imath [G_{1,2}+ G_{1,3}+G_{1,4}+G_{1,5}+G_{2,3}+G_{2,4}+G_{2,5}+G_{3,4}+G_{3,5}+G_{4,5}]=\\\nonumber
  -\imath[\gF\hF\kF(\kF-\km)^2+\gF \kF\hF(\kF-\km)^2+\gF \kF(\kF-\km) \hF(\kF-\km)+\gF \kF(\kF-\km)^2 \hF+\\\nonumber (\kF+\km)\gF\hF(\kF-\km)^2+
  (\kF+\km)\gF\kF\hF(\kF-\km)+(\kF+\km)\gF\kF(\kF-\km)\hF+ \\\nonumber (\kF+\km)\kF\gF\hF(\kF-\km)+(\kF+\km)\kF\gF(\kF-\km)\hF+(\kF+\km)\kF^2\gF\hF].
\end{gather}

Finally, $\mF_{-1}=(\pb'(\o_\i))^{-1}(\Fb+\Gb),$ it is the sum of 15 terms, for a cylinder with radius $\kb,$ and the coefficient $M_{-1}(\xi)$ is equal to  $M_{-1}(\xi)=\kb^{-1}\mF_{-1}(\x)$. This coefficient is universal, and for  a general surface with principal curvatures $\kb_1(\xb), \kb_2(x)$, the symbol is calculated using the linearity in the principal curvatures, see \eqref{perm.superfin}. The integrand in the asymptotic formulas is, up to some constant,

For calculating $\mF_\i$ for $\i=0$ and for $\i=1$, the formulas are analogous to \eqref{type1.expl}, \eqref{type2.exp}, but with  different combinations of $\kF-\o_\i,$ in accordance with \eqref{type1}, \eqref{type2}.

\subsection{Dependence on the material}
The explicit  expression for $\mF(x,\x)$ is extremely cumbersome; if typed, it would fill several unreadable pages.  We, however, have already determined its  dependence on the geometry of the body $\Dc. $  What we can do now is to describe its dependence  on the material of the body, namely on the Lam\'e constants.

When analysing the expression \eqref{type1.expl},  we see that  in each of 5 summands  (the last one vanishes, but this does not matter) there are 4 factors containing the principal symbol $\kF=\kF_0(\x)$,  each of them contains the factor $\kn$. Additionally, one of factors is the symbol $\jF,$ \emph{linear in $\km $ and $\mm$.}  The above 4 entries of $\km$ are cancelled by the factor $(\pb_{\i}'(\o_\i))^{-1}$ in \eqref{complete symbol} which contains $\km^{-4}$.  What remains, is that all   summands in \eqref{type1.expl} produce a contribution to $\mF_\i$, being linear form   linearly of $\km$ and $\mm$ with universal coefficients which are some matrices depending only on $\x$ (and, of course, on $\i$).
Let us pass to the terms of type 2 in \eqref{type2.exp}.  Each summand in $\Gb$ is a product of five terms, of which 3 contain the principal symbol $\kF=\kF_0(x,\x)$, the other two are the gradient in $\x$ and the gradient in $x$ variables of $\kF.$ As follows from our calculations above, each of these factors in  \eqref{type2} contain the factor $\km$, altogether 5 of them. Four of them are canceled, again, by the coefficient $(\pb_{\i}'(\o_i))^{-1}$, therefore, again, a linear dependence of the parameter $\km$ remains in $\mF_{\i}$ but  not a dependence on $\mm$ any more. We sum this reasoning by stating that the contribution of the term $\Gb$ in  \eqref{complete symbol} to $\mF_{\i}$ is a linear function of quantities $\km=\frac{\m}{2(\l+2\m)}$  with a universal coefficient.

Altogether, we obtain that the matrix $M_{\i}(\x)$ is a linear form of $\km$ and $\mm=\frac12-\km$ with universal coefficients.
Therefore, \eqref{Formula3} involves the eigenvalues of a matrix depending linearly on the material parameters $\km$ and $\mm$  and linearly on the principal curvatures. We arrive at the following representation for the effective symbol.
\begin{theorem}\label{Thm.dependence}
  The effective symbol $\mF_\i$ satisfies
\begin{equation}\label{subsymbol}
\mF_{\i}(x,\x)=\kb_1(x)(\kn +\mm Y_{\i}(\x) )+\kb_2(x) V^{-1}(\kn X_{\i}(\hat{\x})+\mm Y_{\i}(\hat{\x}) )V^{-1}.
\end{equation}
with  universal matrix functions $X_{\i}(\x),Y_{\i}(\x),$ order $-1$ positively  homogeneous in $\x\in\R^2.$
\end{theorem}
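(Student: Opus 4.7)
My plan is to combine three ingredients already established in the paper: the curvature-linear structure of $\mF_\i(x,\x)$ from Theorem~\ref{Result}; the permutation identity~\eqref{perm.superfin}, which reduces the two universal matrices $M^{(1)}_\i$, $M^{(2)}_\i$ to a single one $M_\i$; and a careful count of the degrees of $\km$ and $\mm$ entering each summand of~\eqref{complete symbol}. The goal is to isolate the material dependence as an affine map in $(\km,\mm)$ with universal matrix-valued coefficients depending only on $\x$ and $\i$.

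First I would invoke Theorem~\ref{Result} together with~\eqref{perm.superfin} to write
\begin{equation*}
\mF_{\i}(x,\x)=\kb_1(x)\,M_{\i}(\x)+\kb_2(x)\,V^{-1}M_{\i}(\hat\x)\,V,
\end{equation*}
where $M_\i:=M^{(1)}_\i$ is a single universal $3\times 3$ matrix, order $-1$ positively homogeneous in $\x$. It therefore suffices to show that $M_\i(\x)=\km X_\i(\x)+\mm Y_\i(\x)$ with $X_\i,Y_\i$ independent of the Lam\'e constants. On the cylinder of Subsection~\ref{Sect.Stand} one has $\kb_2\equiv 0$, hence $\mF_\i=\kb\,M_\i$ there, and the problem reduces to verifying the asserted linear $(\km,\mm)$-structure for $\mF_\i$ in that single model geometry, where all ingredients have already been tabulated.

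Then I would carry out a homogeneity count using the explicit formulas for $\kF_0$, $\jF=\kF_{-1}$, $\partial_{\x_1}\kF_0$ and $\partial_{x_1}\kF_0$ displayed at the end of Subsection~\ref{Section.Gradient}. The essential observations are: $\kF_0$ and its derivatives in $x$ and $\x$ equal $\km$ times a matrix free of $\km$ and $\mm$; each factor $\kF_0-\o_{\vs_j}$ is therefore homogeneous of degree $1$ in $\km$ with no $\mm$-content, since $\o_{\vs_j}\in\{-\km,0,\km\}$; the subsymbol is jointly linear, $\jF=\km\,\uF+\mm\,\vF$, with $\uF,\vF$ material-free; and $(\pb'_\i(\o_\i))^{-1}=c\,\km^{-4}$ with $c$ independent of $\km,\mm$. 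Consequently, each type-1 summand~\eqref{type1} is a product of four $\kF_0-\o_{\vs_j}$ factors (homogeneous of degree $4$ in $\km$, no $\mm$) and one $\jF$ (linear in $(\km,\mm)$); multiplied by the $\km^{-4}$ prefactor its contribution to $\mF_\i$ is linear in $(\km,\mm)$. Each type-2 summand~\eqref{type2} consists of three $\kF_0-\o_{\vs_j}$ factors together with $\partial_{\x_\a}\kF_0$ and $\partial_{x_\a}\kF_0$, i.e.\ five factors each proportional to $\km$ and carrying no $\mm$; after the $\km^{-4}$ prefactor its contribution reduces to a single $\km$-term. Summing the $25$ contributions yields $M_\i(\x)=\km X_\i(\x)+\mm Y_\i(\x)$ with universal $X_\i,Y_\i$, and combined with the first display this is exactly~\eqref{subsymbol}.

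The main obstacle, once these preparations are collected, is conceptual rather than computational: one must verify that no hidden $\mm$-dependence can sneak in through the principal symbol or its derivatives---only through $\jF$. This is apparent from~\eqref{NPPrinc}, from the explicit cylindrical expression for $\kF_0$ on the first line of~\eqref{symbol KF}, and from the derivative computations~\eqref{deriv.x} and its $\x$-counterpart, none of which contain $\mm=\tfrac12-\km$. I would deliberately not attempt to write out $X_\i,Y_\i$ in closed form; the content of the theorem is precisely the packaging of the $25$-term combinatorics of~\eqref{type1}--\eqref{type2} into two universal matrix-valued functions, whose symmetry and sign structure---rather than their detailed entries---will govern the asymptotic coefficients.
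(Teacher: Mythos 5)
Your proposal is correct and follows essentially the same route as the paper: reduction via Theorem~\ref{Result} and \eqref{perm.superfin} to the single universal matrix $M_\i$ computed on the cylinder, followed by the same $\km$/$\mm$ bookkeeping (each factor $\kF_0-\o_{\vs_j}$ and each derivative of $\kF_0$ proportional to $\km$, the subsymbol linear in $(\km,\mm)$, and the prefactor $(\pb'_\i(\o_\i))^{-1}\propto\km^{-4}$ cancelling four powers), yielding $M_\i(\x)=\km X_\i(\x)+\mm Y_\i(\x)$ with universal $X_\i,Y_\i$. Your reading of \eqref{subsymbol} with $\kn X_\i(\x)$ in the first bracket is the intended one, so nothing further is needed.
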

We obtain more knowledge  on the matrices $\kn X_{i}(\x)$, $Y_{\i}(\x)$ after some further analysis of the spherical case.

\subsection{Returning to the sphere} Having the expression \eqref{subsymbol} for the symbol $\mF_{\i}$, we return to the result \eqref{two-sided} for the two-sided asymptotics of eigenvalues,
We obtain the following expressions for the coefficients $\Ac_\i$ and $\Bc_\i$ in \eqref{two-sided}.

\begin{gather}\label{Ac.new}
 \Ac_\i=
 2\pi^{-1} \left[\int_{S^1}\kn^2 \tr (X_{\i}(\x))^2\pmb{\pmb{\o}} +2\int_{S^1}\kn\mm  \tr(X_{\i}(\x)Y_{\i}(\x))\pmb{\pmb{\o}}+\mm^2\int_{S^1}\tr Y^{2}_{\i}(\x)\pmb{\pmb{\o}}\right]\\\nonumber =\pmb{\Omega}_{\i,W}(\kn,\mm),
\end{gather}

and a similar expression for $\Bc_{\i}$,

\begin{gather}\label{Bc.new}
  \Bc_{\i} = 2
 (\int_{S^1} \tr\left[(\kn X_\i(\x)+\mm Y_\i(\x))V^{-1}(\kn X_\i(\x)+\mm Y_\i(\x))V\right]\pmb{\pmb{\o}} - \Ac_{\i} \\\nonumber
  = \pmb{\Omega}_{\i,\chi}(\kn,\mm).
\end{gather}
In these formulas  $\pmb{\Omega}_{\i,W}(\kn,\mm)$, $\pmb{\Omega}_{\i,\chi}(\kn,\mm)$ are quadratic forms of the quantities $\kn,\mm$, with universal coefficients depending only on $\i.$

In particular, these coefficients have the same form for the body being the unit ball; the Euler characteristic and the Willmore energy for the sphere equal, correspondingly, $\chi(S^2)=2,$ $W(S^2)=4\pi$. We compare \eqref{Ac.new}, \eqref{Bc.new}  with the asymptotic formulas \eqref{ball.counting}. Due to the fact that both formulas \eqref{ball.counting} and \eqref{two-sided} with coefficients \eqref{Ac.new},
\eqref{Bc.new} must give the same result for all combinations of the Lam\'e constants $\l,\m$,  we derive from this comparison the following. In particular, the coefficient in the eigenvalue asymptotics for a sphere, see \eqref{ball.counting}, for $\i=0$ does not depend on the Lam\'e constants, therefore, all their entries for $\i=0$ cancel due to $\kn+\mm=\frac12$.

\begin{theorem}\label{Th.Coeff}The quadratic form $\Upsilon_{\i}(\kn,\mm)=2\pi^{-1} \Ac_{\i}(\kn,\mm)+2{\Bc_{\i}}$ possesses the following properties
\begin{itemize}
               \item  For $\i=\pm1,$ the form $\Upsilon_{\i}(\kn,\mm)$ equals $\g_{\i}\kn^2,$
               \item  For $\i=0$, $\Upsilon_{\i}(\kn,\mm)= \g_0,$
             \end{itemize}
  Here $\g_\i$ are  absolute numeric  coefficients obtained by integrating expressions containing the matrix-functions $X_\i,Y_\i,$ see \eqref{Ac.new}, \eqref{Bc.new}.
\end{theorem}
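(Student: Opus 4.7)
The plan is to compare, for the specific case $\G=S^2$, the general two-sided asymptotic coefficient from Theorem~\ref{thm.twosided} with the one read off from the explicit eigenvalue list \eqref{ball.counting}. Since $\chi(S^2)=2$ and $W(S^2)=4\pi$, the formula \eqref{two-sided} specialized to the sphere reads
\[
   C(\o_\i)^{S^2} \;=\; 4\pi\,\Ac_\i(\kn,\mm) + 2\,\Bc_\i(\kn,\mm),
\]
which, up to a fixed global normalization, is the combination $\Upsilon_\i(\kn,\mm)$ of the theorem. On the other hand, from \eqref{ball.counting} we have $n_-(\KF;\o_\i,\t)=0$ in each of the three cases, so $n(\KF;\o_\i,\t)=n_+(\KF;\o_\i,\t)$, and a direct read-off yields $C(0)^{S^2}=\tfrac{9}{16}$ and $C(\pm\kn)^{S^2}=16\kn^2$.

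The decisive structural input is the universality established in \eqref{Ac.new}--\eqref{Bc.new}: both $\Ac_\i$ and $\Bc_\i$ are homogeneous quadratic forms in $(\kn,\mm)$ whose coefficients are integrals over $S^1$ of the universal matrix-functions $X_\i,Y_\i$, hence depend only on $\i$. Consequently $\Upsilon_\i(\kn,\mm)$ is itself a homogeneous quadratic form in $(\kn,\mm)$ with $\i$-dependent, Lam\'e-free coefficients. Equating this form with the explicit sphere value for each $\i$, and using the physical constraint $\kn+\mm=\tfrac12$ to express the result as a polynomial in $\kn$ alone, I obtain: for $\i=0$, a polynomial of degree $\le 2$ identically equal to the constant $9/16$, forcing $\Upsilon_0=\g_0$; for $\i=\pm1$, a polynomial identically equal to $16\kn^2$, forcing $\Upsilon_{\pm1}=\g_{\pm1}\kn^2$. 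The numerical values of the $\g_\i$ are then read off from the right-hand sides (modulo the fixed normalization).

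The main subtlety is interpretive rather than technical: the parameters $\kn$ and $\mm$ are tied by $\kn+\mm=\tfrac12$, so the sphere comparison really only pins down $\Upsilon_\i$ along this affine line in the $(\kn,\mm)$-plane. The content of the theorem is that, when $\Upsilon_\i$ is viewed through the physical parametrization by $\kn$ alone, its Lam\'e-dependence collapses to a monomial of degree zero (for $\i=0$) or degree two (for $\i=\pm1$); what makes this passage unambiguous is that the universality of \eqref{Ac.new}--\eqref{Bc.new} guarantees that the coefficients of the quadratic form $\Upsilon_\i$ are absolute numbers independent of the body, so the identification extracted from the sphere transports directly to the constants $\g_\i$ as stated.
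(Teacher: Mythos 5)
Your proposal follows essentially the same route as the paper: specialize the two-sided asymptotics of Theorem \ref{thm.twosided} to the sphere (with $\chi(S^2)=2$, $W(S^2)=4\pi$), read off the explicit coefficients $9/16$ and $16\kn^2$ from \eqref{ball.counting}, and use the universality of $X_\i,Y_\i$ in \eqref{Ac.new}--\eqref{Bc.new} together with the constraint $\kn+\mm=\tfrac12$ to force the quadratic form to collapse to a constant for $\i=0$ and to a multiple of $\kn^2$ for $\i=\pm1$. Your explicit remark that the comparison only determines $\Upsilon_\i$ along the physical line $\kn+\mm=\tfrac12$, with universality transporting the identification to absolute constants $\g_\i$, is a correct and slightly more careful rendering of the argument the paper itself gives.
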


We can also note that for the sphere, there are no eigenvalues approaching $\o_{\i}$ from below. This means that the symbol \eqref{subsymbol} is non-negative for all $\i,\km,\x.$
\appendix
\section{Some remarks on the nonhomogeneous material} In the paper \cite{MR3D} an approach was made to consider a mathematical model of a \emph{nonhomogeneous} elastic body (with the Lam\'e) parameters $\l(\xb),\m(\xb)$ being smooth, possibly nonconstant, functions of the point $\xb\in\Dc.$ A possible version of the NP operator was constructed. It was found that the essential spectrum of this operator $\KF$ may fill nontrivial intervals
of the real line, namely, the set of values of the, now functions, $\km(\xb)=\frac{\m(\xb)}{2(2\m(\xb)+\l(\xb))}$ and $-\km(\xb)$ for $b\x\in\G$, as well as the  point $0$.  This fact was based upon the representation,  found in \cite{MR3D}, of $\KF$ as an order zero pseudodifferential operator with principal symbol
\begin{equation}\label{NPPrincVar}
    \kF_0({x},\x)=\frac{i \mathbbm{k}(x)}{|\x|}\begin{pmatrix}
                    0 & 0 & -\x_1 \\
                    0 & 0 & -\x_2 \\
                    \x_1 & \x_2 & 0 \\
                  \end{pmatrix},
 \end{equation}
 $(x,\x)\in T^*\G,$
in the same local co-ordinate system and local frame, as we use here for a homogeneous material. A  question arises about the eigenvalues converging to the tips of the essential spectrum, namely, to the boundary points of the above intervals.  It is natural to expect that the character of this convergence should depend on the structure of these boundary points.

The starting point here  is the case of a nondegenerate extremal point of $\km(x)$ at the boundary point. If $\km(x),$ say, has a nondegenerate maximal point at $\xb^\circ\in\G$ then the corresponding eigenvalue $\s(x,\x)$ of the principal symbol $\kF_0(x,\x)$ is equal to $\km(\xb^\circ)$ and therefore does not depend on $\x.$ It has its extremal value, moreover a nondegenerate one, for all $\x.$

Another   special (and quite convenient) property concerns the subsymbol of $\KF.$ As the reasoning in the present paper shows, the subsymbol of $\KF$ is constructed using the Taylor expansion of the integral kernel of the operator $\KF$. This reasoning, based on the explicit formulas in \cite{MR3D}, can be performed analogously to the one in Sect. 5 in our present paper. Unlike our present case, for a nonhomogeneous material, this expansion would involve not only geometrical characteristics (principal curvatures) of $\G,$ but  also the derivatives of the function $\km(x)$ along directions on $\G.$ Fortunately, in the extremal point of $\km(x),$ the first order derivatives of this function vanish, therefore, the expression for the subsymbol of $\KF$ at this point turns out to be the same as for the homogeneous case, thus given by \eqref{symbol KF}. A construction has been performed in  \cite{RozNP1}, showing that under the above conditions, the knowledge of the second order jet of the eigenvectors of $\kF_0(x,\x)$  at the point $\xb^\circ,$ together of the subsymbol $\kF_{-1}$ at this point suffices to find the asymptotics of eigenvalues converging to
$\km(\xb^\circ).$

\section*{Acknowledgments} the Author is grateful to Y.Miyanishi for introducing him to the NP problematic and for useful discussions. .

\section*{Declarations}

\begin{itemize}
\item Funding.  The Author was supported  by the grant from the Russian Fund of Basic Research 20-01-00451, (Sections 1-4) and the grant from the Russian Science Foundation, Project 20-11-20032 (Sections 5,6)
\item Conflict of interest/Competing interests : the Author declares no conflict of interest.
\item Consent to participate: Not applicable
\item Consent for publication: Not applicable
\item Availability of data and materials: Not applicable
\item Code availability: Not applicable
\item Authors' contributions: Not applicable
\end{itemize}

\noindent


\begin{thebibliography}{99}

\bibitem{AgrLame}M. Agranovich, B. Amosov, M. Levitin. \emph{Spectral problems for the Lam\'e system with spectral parameter
 in boundary conditions on smooth or nonsmooth boundary.} {Russian J. Math. Phys.} \textbf{6}, (5), (1999), 247--281.

\bibitem{Ammari20}H. Ammari, Y. Chow, H. Liu. Quantum ergodicity and localization of plasmon resonances. \texttt{Arxiv: 2003.03696}

\bibitem{Ammari}H. Ammari, G. Ciraolo, H. Kang, H. Lee, G. Milton.\emph{ Spectral theory of a Neumann--Poincar\'{e} operator and analysis of cloaking due to anomalous localized resonance,} {Arch. Rat. Mech. Anal.}, \textbf{208}, (2013), 667--692.

    \bibitem{Ando2D}  K.Ando; Y.--G. Ji; H. Kang ; K. Kim; S. Yu. \emph{Spectral properties of the Neumann--Poincar\'e
       operator and cloaking by anomalous localized resonance for the elasto-static system.} European J. Appl. Math. \textbf{29}
        (2018), no. 2, 189--225.

\bibitem{3D}K. Ando, H. Kang, Y. Miyanishi,\emph{ Elastic Neumann--Poincar\'e operators in three dimensional smooth domains:
 polynomial compactness and spectral structure,} {Int. Math. Res. Not. IMRN}, \textbf{2019} (12), 2019, 3883--3900.


   \bibitem{IOP} K. Ando, H. Kang, Y. Miyanishi. \emph{ Spectral structure of elastic Neumann--Poincar\'e operators.}
  {J.Physics: Conference series}, \textbf{965}, (2018), 012027.

  \bibitem{AKM}K. Ando, H. Kang, Y. Miyanishi. \emph{Convergence rate for eigenvalues of the elastic Neumann--Poincar\'e operator
on smooth and real analytic boundaries in two dimensions.}  J. Math. Pures Appl. (9) 140 (2020), 211--229.

\bibitem{AndoReson}K. Ando, H. Kang, Y. Miyanishi, T. Nakazawa, \emph{Surface localization of plasmons in three dimensions and convexity}  SIAM J. Appl. Math. 81 (2021), no. 3, 1020--1033.



\bibitem{AndoEll}K.  Ando, H. Kang, Y. Miyanishi, M. Putinar. \emph{Spectral analysis of Neumann-Poincaré operator.} Rev. Roumaine Math. Pures Appl. \textbf{66} (2021), no. 3--4, 545--575.




\bibitem{BS}M. Birman, M. Solomyak.\emph{ Asymptotic behavior of the spectrum of pseudodifferential operators with anisotropically
 homogeneous symbols.}  I.  (Russian) Vestnik Leningrad. Univ. 1977,
no. 13, Mat. Meh. Astronom, 13--21.
(English)  Vestnik Leningr. Univ. Math. \textbf{10} (1982) 237--247.  II. (Russian) Vestnik Leningrad. Univ.  no. 13, Mat. Mekh. Astronom.   (1979), 5--10.  (English) Vestn. Leningr. Univ., Math. \textbf{12} (1980), 155--161.

\bibitem{BSSib} M. Birman, M. Solomyak.\emph{  Asymptotic behavior of the spectrum of variational problems on solutions of elliptic equations.} (Russian) Sibirsk. Mat. Zh. \textbf{20}, no. 1 (1979), 3--22.  English translation in: Sib. Math. J. \textbf{20} (1979), 1--15.

\bibitem{Capo.Diag} M. Capoferri, \emph{Diagonalization of elliptic systems via pseudodifferential projections} J. Differential Equations \textbf{313} (2022) 157--187.
    \bibitem{CapoVas.Diag}M. Capoferri, D. Vassiliev, \emph{Invariant subspaces of elliptic systems II: spectral theory} J. Spectr. Theory \textbf{12} no. 1 (2022) 301--338.
        \bibitem{CR.Diag}M. Capoferri, G. Rozenblum, N. Saveliev, D. Vassiliev. \emph{Topological obstructions
to the diagonalisation of pseudodifferential systems.} \texttt{To appear in:} Proc. AMS. \texttt{ ArXiv 2203.16957}.

\bibitem{DLL}Y. Deng, H. Li, H. Liu. \emph{On spectral properties of Neuman--Poincar\'e operator and plasmonic resonances in 3D elastostatics,}
 \emph{J. Spectral Theory}, \textbf{9} (2019), no. 3, 767--789.

\bibitem{Duduchava} R. Duduchava, D. Natroshvili. \emph{Mixed crack type problem in anisotropic elasticity.} Math. Nachrichten, \textbf{191}  (1998), 83--107.

% \bibitem{Grubb84} G.Grubb. \emph{Singular Green operators and their spectral asymptotics.} Duke Math. J. \textbf{51} (1984), 477--528.
\bibitem{Fukushima} S. Fukushima, Y.-G. Ji, H. Kang. \emph{A decomposition theorem of surface vector fields and spectral structure of the Neumann-Poincaré operator in elasticity.} \texttt{Arxiv: 2211.15879}

\bibitem{Gutierrez}C. Gutierrez, J. Sotomayor,
\emph{Lines of curvature, umbilic points and Carath\'eodory conjecture}.
Resenhas 3 (1998), no. 3, 291--322.
\bibitem{Ivanov}V. Ivanov
\emph{An analytic conjecture of Carath\'eodory.} (Russian. Russian summary)
Sibirsk. Mat. Zh. 43 (2002), no. 2, 314--405; translation in
Siberian Math. J. 43 (2002), no. 2, 251--322
\bibitem{Ivrii}  V. Ivrii. Microlocal Analysis, Sharp Spectral
Asymptotics and Applications. Vol.1. Springer, Cham, 2019. Sect. 11.8.
\bibitem{KK}H. Kang, D. Kawagoe. \emph{Surface Riesz transforms and spectral properties of elastic Neumann--Poincar\'e operators on less
 smooth domains in three dimensions.} \texttt{arXiv:1806.02026}.
 \bibitem{Kang} H. Kang \emph{Spectral geometry and analysis of the Neumann-Poincaré operator, a review.}
 In: N. Kang, J. Choe, K. Choi, Sh. Kim,  (eds) Recent Progress in Mathematics. KIAS Springer Series in Mathematics, vol 1. Springer, Singapore. 2022.
\bibitem{Kawagoe}D. Kawagoe. \emph{Spectral analysis on the elastic Neumann--Poincaré operator. } RIMS Kôkyûroku, 2174 (2021) 59-72.
   \bibitem{KuprPot}V. Kupradze.\emph{ Potential Methods in the Theory of Elasticity.} Gosudarstv. Izdat. Fiz.-Mat. Lit., Moscow, 1963. (Russian), Daniel Davey, 1965. (English.)

\bibitem{Kupr79}V. Kupradze, T. Gegelia, T. Bashelishvili, T. Burchaladze. \emph{Three-Dimensional Problems in the Mathematical
Theory of Elasticity and Termoelasticity}. Moscow: Nauka, 1976. (Russian); North Holland, 1979.(English)
\bibitem{Landkof}N.S. Landkof. {Foundations of Modern Potential Theory.} "Nauka'', Moscow 1966. (Russian.) Springer, 1972. (English.)
    \bibitem{Li} H. Li. \emph{Recent progress on the mathematical study of anomalous localized resonance in elasticity.} Electron. Res. Arch. \textbf{28} (2020), no. 3, 1257--1272.
\bibitem{LiLiu}H. Li, H. Liu. \emph{On three--dimensional plasmon resonance in elastostatics.} \emph{Ann. Mat. Pura Appl}.
 (4)\textbf{ 196} (3), (2017), 1113--1135.

\bibitem{M} Y. Miyanishi. \emph{{Weyl's law for the eigenvalues of the Neumann--Poincare operators in three dimensions: Willmore energy and surface geometry.}} Adv. Math. 406 (2022), Paper No. 108547, \texttt{arXiv:1806.03657}
 \bibitem{MR} Y. Miyanishi, G. Rozenblum,
\emph{Eigenvalues of the Neumann--Poincar\'e operator in dimension 3: Weyl's law and geometry.}
Algebra i Analiz 31 (2019), no. 2, 248--268, reprinted in St. Petersburg Math. J. 31 (2020), no. 2, 371--386.
\bibitem{MR3D}Y. Miyanishi, G. Rozenblum, \emph{Spectral properties of the Neumann--Poincar\'e operator
 in 3D elasticity.}  Int. Math. Res. Not. IMRN (2021), (11), 8715--8740.  \texttt{arXiv:1904.09449}
\bibitem{Ponge} R. Ponge. \emph{Connes’ integration and Weyl’s laws} \texttt{ To appear} in J. Noncomm. Geom.;   \texttt{Arxiv: 2107.01242}
\bibitem{RozNP1}G. Rozenblum. Eigenvalue asymptotics for polynomially compact pseudodifferential operators and applications.  Algebra i Analiz 33 (2021),  (2), 215--232. \texttt{ ArXiv:2006.10568}
%\bibitem{Tao} Z.Tao. \emph{0-th Order pseudo-differential operator on the circle.}\texttt{arXiv:1909.06316}
\bibitem{RozArx}G. Rozenblum. \emph{Discrete spectrum of zero order pseudodifferential operators.} \texttt{Arxiv: 2112.05733}.

    \bibitem{Willmore} Willmore Energy and Willmore Conjecture. Edited by Magdalena D. Toda. Monographs and Research Notes in Mathematics. CRC Press, Boca Raton, FL, 2018.

%\bibitem{Yaf}D. Yafaev.\emph{Scattering by magnetic fields.} (Russian. )Algebra i Analiz  \textbf{17}  (2005),  no. 5, 244--272;  translation in St. Petersburg Math. J.  \textbf{17}  (2006),  no. 5, 875--895
\end{thebibliography}
\end{document}